\documentclass[12pt]{amsart}
\usepackage{amsmath,amssymb,amsbsy,amsfonts,latexsym,amsopn,amstext,cite,
                                               amsxtra,euscript,amscd,bm,mathabx}
\usepackage{url}
\usepackage[colorlinks,linkcolor=blue,anchorcolor=blue,citecolor=blue,backref=page]{hyperref}
\usepackage{color}
\usepackage{graphics,epsfig}
\usepackage{graphicx}
\usepackage{float} 
\usepackage[english]{babel} 
\usepackage{mathtools}
\usepackage{todonotes}
\usepackage{url}
\usepackage[colorlinks,linkcolor=blue,anchorcolor=blue,citecolor=blue,backref=page]{hyperref}

\usepackage[norefs,nocites]{refcheck}

\hypersetup{breaklinks=true}

\usepackage[norefs,nocites]{refcheck}
\usepackage[english]{babel}
\begin{document}

\newtheorem{thm}{Theorem}
\newtheorem{lem}[thm]{Lemma}
\newtheorem{claim}[thm]{Claim}
\newtheorem{cor}[thm]{Corollary}
\newtheorem{prop}[thm]{Proposition} 
\newtheorem{definition}[thm]{Definition}
\newtheorem{question}[thm]{Open Question}
\newtheorem{qn}[thm]{Question}
\newtheorem{conj}[thm]{Conjecture}
\newtheorem{prob}{Problem}

\theoremstyle{remark}
\newtheorem{rem}[thm]{Remark}

\newcommand{\GL}{\operatorname{GL}}
\newcommand{\SL}{\operatorname{SL}}
\newcommand{\lcm}{\operatorname{lcm}}
\newcommand{\ord}{\operatorname{ord}}
\newcommand{\Op}{\operatorname{Op}}
\newcommand{\Tr}{\operatorname{Tr}}
\newcommand{\Nm}{\operatorname{Nm}}

\numberwithin{equation}{section}
\numberwithin{thm}{section}
\numberwithin{table}{section}

\def\vol {{\mathrm{vol\,}}}
\def\squareforqed{\hbox{\rlap{$\sqcap$}$\sqcup$}}
\def\qed{\ifmmode\squareforqed\else{\unskip\nobreak\hfil
\penalty50\hskip1em\null\nobreak\hfil\squareforqed
\parfillskip=0pt\finalhyphendemerits=0\endgraf}\fi}

\def \balpha{\bm{\alpha}}
\def \bbeta{\bm{\beta}}
\def \bgamma{\bm{\gamma}}
\def \blambda{\bm{\lambda}}
\def \bchi{\bm{\chi}}
\def \bphi{\bm{\varphi}}
\def \bpsi{\bm{\psi}}
\def \bomega{\bm{\omega}}
\def \btheta{\bm{\vartheta}}

\def\eps{\varepsilon}

\newcommand{\bfxi}{{\boldsymbol{\xi}}}
\newcommand{\bfrho}{{\boldsymbol{\rho}}}

\def\Kab{\sfK_\psi(a,b)}
\def\Kuv{\sfK_\psi(u,v)}
\def\SaUV{\cS_\psi(\balpha;\cU,\cV)}
\def\SaAV{\cS_\psi(\balpha;\cA,\cV)}

\def\SUV{\cS_\psi(\cU,\cV)}
\def\SAB{\cS_\psi(\cA,\cB)}

\def\Kmnp{\sfK_p(m,n)}

\def\KKap{\cH_p(a)}
\def\KKaq{\cH_q(a)}
\def\KKmnp{\cH_p(m,n)}
\def\KKmnq{\cH_q(m,n)}

\def\Klmnp{\sfK_p(\ell, m,n)}
\def\Klmnq{\sfK_q(\ell, m,n)}

\def \SALMNq {\cS_q(\balpha;\cL,\cI,\cJ)}
\def \SALMNp {\cS_p(\balpha;\cL,\cI,\cJ)}

\def \SACXMQX {\fS(\balpha,\bzeta, \bxi; M,Q,X)}

\def\SAMJp{\cS_p(\balpha;\cM,\cJ)}
\def\SAMJq{\cS_q(\balpha;\cM,\cJ)}
\def\SAqMJq{\cS_q(\balpha_q;\cM,\cJ)}
\def\SAJq{\cS_q(\balpha;\cJ)}
\def\SAqJq{\cS_q(\balpha_q;\cJ)}
\def\SAIJp{\cS_p(\balpha;\cI,\cJ)}
\def\SAIJq{\cS_q(\balpha;\cI,\cJ)}

\def\RIJp{\cR_p(\cI,\cJ)}
\def\RIJq{\cR_q(\cI,\cJ)}

\def\TWXJp{\cT_p(\bomega;\cX,\cJ)}
\def\TWXJq{\cT_q(\bomega;\cX,\cJ)}
\def\TWpXJp{\cT_p(\bomega_p;\cX,\cJ)}
\def\TWqXJq{\cT_q(\bomega_q;\cX,\cJ)}
\def\TWJq{\cT_q(\bomega;\cJ)}
\def\TWqJq{\cT_q(\bomega_q;\cJ)}

 \def \xbar{\overline x}
  \def \ybar{\overline y}

\def\cA{{\mathcal A}}
\def\cB{{\mathcal B}}
\def\cC{{\mathcal C}}
\def\cD{{\mathcal D}}
\def\cE{{\mathcal E}}
\def\cF{{\mathcal F}}
\def\cG{{\mathcal G}}
\def\cH{{\mathcal H}}
\def\cI{{\mathcal I}}
\def\cJ{{\mathcal J}}
\def\cK{{\mathcal K}}
\def\cL{{\mathcal L}}
\def\cM{{\mathcal M}}
\def\cN{{\mathcal N}}
\def\cO{{\mathcal O}}
\def\cP{{\mathcal P}}
\def\cQ{{\mathcal Q}}
\def\cR{{\mathcal R}}
\def\cS{{\mathcal S}}
\def\cT{{\mathcal T}}
\def\cU{{\mathcal U}}
\def\cV{{\mathcal V}}
\def\cW{{\mathcal W}}
\def\cX{{\mathcal X}}
\def\cY{{\mathcal Y}}
\def\cZ{{\mathcal Z}}
\def\Ker{{\mathrm{Ker}}}
\def\g{{\mathrm{gcd}}}

\def\NmQR{N(m;Q,R)}
\def\VmQR{\cV(m;Q,R)}

\def\Xm{\cX_m}

\def \A {{\mathbb A}}
\def \B {{\mathbb A}}
\def \C {{\mathbb C}}
\def \N {{\mathbb N}}
\def \F {{\mathbb F}}
\def \G {{\mathbb G}}
\def \L {{\mathbb L}}
\def \K {{\mathbb K}}
\def \PP {{\mathbb P}}
\def \Q {{\mathbb Q}}
\def \R {{\mathbb R}}
\def \Z {{\mathbb Z}}
\def \fS{\mathfrak S}

\def\e{{\mathbf{\,e}}}
\def\ep{{\mathbf{\,e}}_p}
\def\eq{{\mathbf{\,e}}_q}
\def\er{{\mathbf{\,e}}_R}
\def\esr{{\mathbf{\,e}}_r}
\def\\{\cr}
\def\({\left(}
\def\){\right)}
\def\fl#1{\left\lfloor#1\right\rfloor}
\def\rf#1{\left\lceil#1\right\rceil}

\def\Tr{{\mathrm{Tr}}}
\def\Nm{{\mathrm{Nm}}}
\def\Im{{\mathrm{Im}}}

\def \oF {\overline \F}

\newcommand{\pfrac}[2]{{\left(\frac{#1}{#2}\right)}}

\def \Prob{{\mathrm {}}}
\def\e{\mathbf{e}}
\def\ep{{\mathbf{\,e}}_p}
\def\epp{{\mathbf{\,e}}_{p^2}}
\def\em{{\mathbf{\,e}}_m}

\def\Res{\mathrm{Res}}
\def\Orb{\mathrm{Orb}}

\def\vec#1{\mathbf{#1}}
\def \va{\vec{a}}
\def \vb{\vec{b}}
\def \vm{\vec{m}}
\def \vu{\vec{u}}
\def \vv{\vec{v}}
\def \vx{\vec{x}}
\def \vy{\vec{y}}
\def \vz{\vec{z}}
\def\flp#1{{\left\langle#1\right\rangle}_p}
\def\T {\mathsf {T}}

\def\sfG {\mathsf {G}}
\def\sfK {\mathsf {K}}

\def\mand{\qquad\mbox{and}\qquad}

\title[Character sums of Division polynomials]
{Character sums of Division polynomials twisted by multiplicative functions}

\author{Subham Bhakta}
\address{School of Mathematics and Statistics, University of New South Wales, Sydney, NSW 2052, Australia.} 
\email{subham.bhakta@unsw.edu.au}


\begin{abstract}
Let $E$ be an elliptic curve over the finite field $\mathbb{F}_p$, and $P \in E(\mathbb{F}_p)$ be an $\F_p$-rational point. We obtain nontrivial estimates for multiplicative character sums associated with the division polynomials $\psi_n(P)$ twisted by several multiplicative functions, where $\psi_n(P)$ denotes the $n$-th division polynomial evaluated at $P$.
\end{abstract}

\subjclass[2020]{11G07, 11L40, 11N25, 11N60}

\keywords{Twisted character sum, division polynomial, arithmetic functions}

\maketitle
\tableofcontents
\section{Introduction}

\subsection{Set-up}

Let $E$ be an elliptic curve over $ \F_p $ of characteristic $ p > 3 $ given by the Weierstrass equation as follows, with coefficients in $\F_p$.
\[
y^2=x^3+ax+b.
\]
For any integer $n\geq0$, define the $n$th division polynomial $\psi_n\in\F_p[x,y]$ as follows.
\begin{align*}
\psi_0 &= 0, \quad \psi_1 = 1, \quad \psi_2 = 2y, \\
\psi_3 &= 3x^4 + 6a x^2 + 12b x - a^2, \\
\psi_4 &= 4y(x^6 + 5a x^4 + 20b x^3 - 5a^2 x^2 - 4abx - 8b^2 - a^3),
\end{align*}
for more details, see \cite[Exercise 3.7]{Silv2}, combined with the discussion in \cite[Chapter III.1]{Silv2}.

Denote by $ E(\F_p) $ the group of points on $ E $ defined over $ \F_p $. We can interpret each $\psi_n$ as a rational function on $E(\F_p)$. The multiplication of $P \in E$ by $n$ is given as a rational map by 
\begin{align*}
[n](P) &=
\Bigl(
\frac{x(P)\psi_n^2(P)  - \psi_{n-1}(P)\psi_{n+1}(P)}{\psi_n(P)^2}, \\
& \qquad \qquad \frac{\psi_{n-1}(P)^2\psi_{n+2}(P) - \psi_{n-2}(P)\psi_{n+1}(P)^2}{4y(P)\psi_n(P)^3}
\Bigr).
\end{align*}
It is well known that
\begin{align*}
\psi_{m+n}(P)&\psi_{m-n}(P)\psi_r^2(P)\\
& =\psi_{m+r}(P)\psi_{m-r}(P)\psi_n^2(P)-\psi_{n+r}(P)\psi_{n-r}(P)\psi_m(P)^2,
\end{align*}
for any integers $m,n,r$, see, for example,~\cite[Exercise~3.7(g)]{Silv2}.
 It turns out that $\psi_n(P)$ is periodic, and the period could be as large as $(p-1) \ord P$, 
 where $\ord P$ is the order of $P$ in the group $E(\F_p)$, see~\cite[Corollary~9]{Silv1}.   
 
Let $\chi$ be a multiplicative character of $\F_p^{*}$ of a small order. We consider the sequence $\chi(\psi_n(P))$, where of course we set $\chi(0)=0$. Then, $\chi(\psi_n(P))$ is periodic with a much smaller period (see Section~\ref{sec:elliptic}), compared to the period of $\psi_n(P)$. For points, $P\in E(\F_p)$ of large orders, Shparlinski and Stange~\cite{ShSt} have obtained non-trivial estimates for the character sums of the form 
\begin{equation*}
\sum_{1\le n\le N} \chi(\psi_n(P)),
\end{equation*}
for quadratic characters $\chi$ of $\F_p$, and obtained a nontrivial bound provided $p^{1/2+\varepsilon}\le N\le \ord P$, for some fixed $\varepsilon > 0$. 

Studying the character values $\chi(\psi_n(P))$ has implications for understanding the distribution of rational points on elliptic curves over $\mathbb{Q}$. More specifically, let $E/\mathbb{Q}$ be an elliptic curve given by an integral Weierstrass equation, and let $P \in E(\mathbb{Q})$ be a non-torsion integral point with everywhere good reduction (see \cite[Section~5.1]{BLMN} for an example). We can write the points $nP \in E(\mathbb{Q})$ in lowest terms as
\[
nP = \left( \frac{a_{nP}}{d_{nP}^2}, \frac{b_{nP}}{d_{nP}^3} \right),
\]
where $d_{nP} \in \mathbb{N}$ and $\gcd(a_{nP} b_{nP}, d_{nP}) = 1$. Then \cite[Theorem~A]{Ayad} or \cite[Proposition~3.4]{Verzobio} implies that $d_{nP} = |\psi_n(P)|$, where $\psi_n$ is the $n$th division polynomial associated to $E/\Q$. Consequently, at least for even Dirichlet characters $\chi$ of prime modulus $p$, we study $\chi(d_{nP})$, i.e., the distribution of $d_{nP} \bmod p$. See also some related discussions in \cite[Section~6]{BLMN}.

In this article, we study the correlation between the character values $\chi(\psi_n(P))$ and various multiplicative functions. More precisely, for a given point $P\in E(\F_p)$, we study the twisted sums of the form
\[
    S_{f,\chi,P}(N) = \sum_{1 \leq n\leq N} f(n)\chi(\psi_n(P)),
\]  
where $f$ is a suitable multiplicative function.

\subsection{Main results}\label{sec:results}
\subsubsection{Preliminaries}
Twists of trace functions with suitable multiplicative functions have been studied by Korolev and Shparlinski~\cite{KorShp}. A key ingredient of their study involves a certain correlation property (see \cite[Corollary~4.2]{KorShp}) of trace functions. Building upon the work of Shparlinski and Stange \cite{ShSt}, we first show in Lemma~\ref{lem:corel} that our function $\chi(\psi_n(P))$ also enjoys similar properties. 

Throughout the whole article, we set $\chi$ to always denote a  multiplicative character on $\F_p^{*}$ of order $d$,
and 
\begin{equation}\label{eqn:R}
R=d  \ord P. 
\end{equation}

For $ n \geq 1 $ and $ \nu \geq 1 $, let $ \tau_{\nu}(n) $ denote the $ \nu $-fold divisor function, defined as  
\[
\tau_{\nu}(n) = \sum_{n_1n_2 \cdots n_{\nu} = n} 1,
\]
where the sum runs over ordered $ \nu $-tuples $ (n_1, n_2, \ldots, n_{\nu}) $ of positive integers such that $ n_1n_2 \cdots n_{\nu} = n $. In other words, $ \tau_{\nu}(n) $ is the coefficient of $ n^{-s} $ in the Dirichlet series  
\[
\zeta(s)^v = \sum_{n=1}^\infty \tau_{\nu}(n) n^{-s}.
\]
\subsubsection{A general bound}
For any fixed $C\geq 1$, let us recall the well-known estimate from \cite[Equation (1.80)]{IwKow}
\begin{equation}\label{eq:sumtau}
\sum_{1\leq n\leq N}\tau_{\nu}(n)^{C}\ll N(\log N)^{\nu^{C}-1}.
\end{equation}

We say that a multiplicative function $f : \N \to \C$ is a $\tau_{\nu}$-bounded multiplicative function if $|f(n)|\le \tau_{\nu}(n)$ for all $n \in \N$. We refer the reader to \cite[Sections 1.2 and 1.3]{Man} for examples of various $\tau_{\nu}$-bounded multiplicative functions. Analogous to \cite[Theorem 2.2]{KorShp}, we have the following result. 

\begin{thm}\label{thm:divbdd}
Let $0<\varepsilon\leq 1/2$ be a fixed real number, and assume that
\begin{equation}\label{eqn:Rcondition}
R\ge p^{1/2+\varepsilon}.
\end{equation}
Let $\nu\geq 1$ be an integer. Then, for any $\tau_{\nu}$-bounded multiplicative function $f(n)$, we have
\begin{equation*}
S_{f,\chi,P}(N) \ll \varepsilon^{-\nu} N \frac{(\log \log R)^{\nu}}{\log R},
\end{equation*}
provided that 
\begin{equation}\label{eqn:Nfor1bdd}
    \quad p^{1/12}R^{5/6+\varepsilon}\leq N\leq R.
\end{equation}
\end{thm}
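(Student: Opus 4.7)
My plan is to follow the blueprint of \cite[Theorem~2.2]{KorShp}, substituting their trace-function correlation input with Lemma~\ref{lem:corel}. The central mechanism is a bilinear decomposition of the multiplicative function $f$ via Ramar\'e's identity, followed by Cauchy--Schwarz, which reduces the problem to bounding correlation sums of the form $\sum_{m} \chi(\psi_{p_1 m}(P)) \overline{\chi(\psi_{p_2 m}(P))}$ for distinct primes $p_1 \neq p_2$, where Lemma~\ref{lem:corel} can be applied.

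I will first introduce a threshold $z$ with $2 \le z \le N$, to be calibrated at the end, and split $n \le N$ according to whether some prime $p \le z$ divides $n$ or not. For integers $n$ all of whose prime factors exceed $z$, I apply the pointwise bound $|f(n)\chi(\psi_n(P))| \le \tau_\nu(n)$ together with standard Brun--Mertens-type sieve estimates and~\eqref{eq:sumtau}, producing a contribution of size
\[
\ll \varepsilon^{-\nu}\, N\, \frac{(\log\log R)^{\nu-1}}{\log z},
\]
which has the desired shape once $\log z \asymp \log R$.

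For the complementary range I invoke Ramar\'e's identity
\[
\mathbf{1}\bigl\{\omega_z(n) \ge 1\bigr\} = \sum_{\substack{p \mid n \\ p \le z}} \frac{1}{\omega_z(n)}, \qquad \omega_z(n) := \#\{p \le z : p \mid n\},
\]
to extract one small prime factor. Writing $n = pm$ with $p \nmid m$ (the contribution of higher prime powers being absorbed via~\eqref{eq:sumtau}) and detaching the weight $1/\omega_z(pm)$ by a standard Mellin-type smoothing, I obtain a weighted bilinear sum of the shape
\[
\sum_{p \le z} f(p) \sum_{\substack{m \le N/p \\ p \nmid m}} g(m)\, \chi(\psi_{pm}(P)),
\]
with coefficients satisfying $|g(m)| \ll \tau_\nu(m)$. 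Applying Cauchy--Schwarz in $m$ and expanding the square yields diagonal terms (with $p_1=p_2$) that are absorbed into the main term via~\eqref{eq:sumtau}, and off-diagonal correlation sums to which Lemma~\ref{lem:corel} applies directly.

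The delicate step will be the calibration of $z$. Lemma~\ref{lem:corel} delivers a nontrivial correlation bound only when the inner length $M \asymp N/z$ is sufficiently large relative to $R$ and $p$, and matching this constraint against the sieve loss $1/\log z$ from the smooth part is precisely what forces the lower bound $p^{1/12}R^{5/6+\varepsilon} \le N$ in~\eqref{eqn:Nfor1bdd}. The factor $\varepsilon^{-\nu}$ arises from iterating~\eqref{eq:sumtau} through the Cauchy--Schwarz step, while the $(\log\log R)^\nu$ factor accumulates from logarithmic losses in the divisor sums. The main bookkeeping challenge, and the step I expect to be most delicate, is verifying that the Cauchy--Schwarz applications (possibly iterated when $\nu$ is large) do not erode the claimed saving of $(\log\log R)^\nu/\log R$, which will require carefully tracking the divisor-type losses at each stage.
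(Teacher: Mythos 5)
The overall strategy in your proposal --- a Ramar\'e-type bilinear decomposition, Cauchy--Schwarz in the long variable, and the correlation Lemma~\ref{lem:corel} as the arithmetic input --- is indeed the skeleton of the paper's proof (which follows \cite{KorShp} and replaces \cite[Corollary~4.2]{KorShp} by the bound of Lemma~\ref{lem:corel}). You also correctly identify that the constraint \eqref{eqn:Nfor1bdd} comes from matching the correlation bound against the trivial bound in the bilinear step. However, there is one genuine structural gap.

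You use a \emph{single} threshold $z$ and extract a prime $p\le z$, so that in the bilinear step the dyadic range for $p$ goes all the way down to $p\asymp 2$. This cannot work. After Cauchy--Schwarz in $m$ with $p$ restricted to $(P_0,2P_0]$, the first factor is $\sum_{m\le N/P_0}|g(m)|^2\ll (N/P_0)(\log N)^{\nu^2-1}$ and the \emph{diagonal} $p_1=p_2$ in the second factor contributes at least $\gg \pi(2P_0)\cdot N/P_0\gg N/\log P_0$, giving $T_{P_0}\ll N P_0^{-1/2}(\log N)^{(\nu^2-1)/2}$. For $P_0\asymp 2$ this is of size $N(\log N)^{(\nu^2-1)/2}$, which is trivial for $\nu=1$ and strictly worse than trivial for $\nu\ge 2$; the weight $1/\omega_z(pm)$ only supplies a $\log\log$ saving and does not rescue this. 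The paper avoids the problem by extracting primes from a two-sided interval $I=(x,y]$ with a \emph{lower} cutoff $x=(\log R)^{\nu^2+3}$: then the diagonal contributes $\ll N x^{-1/2}(\log N)^{(\nu^2-1)/2}\ll N/\log R$, which is admissible. The complementary set $\cA_0(N,I)$ of integers with no prime factor in $(x,y]$ --- which in particular contains all the $x$-smooth numbers that your single-threshold split would have tried to handle bilinearly --- is then bounded by the fundamental lemma of the sieve, yielding \eqref{eq:u0}. With $x$ a fixed power of $\log R$ and $y=R^{\varepsilon/4}$, it is precisely the factor $(\log x/\log y)^\nu$ in \eqref{eq:u0} that produces $\varepsilon^{-\nu}(\log\log R)^\nu/\log R$; so, contrary to your bookkeeping, the $\varepsilon^{-\nu}$ and $(\log\log R)^\nu$ do not come from iterated Cauchy--Schwarz or divisor losses, but from the sieve bound on $\cA_0(N,I)$ at these parameter choices. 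To repair your argument, introduce the second (lower) threshold, partition by the number of prime factors in $(x,y]$ exactly as in Section~3 of the paper, and handle $\cA_0$ by the sieve rather than by a crude Mertens bound; the rest of your outline then goes through as intended.
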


Note that \eqref{eqn:Nfor1bdd} enforces \eqref{eqn:Rcondition}. Moreover, Remark~\ref{eq:quadchars} and the proof of Theorem~\ref{thm:divbdd} together show that, for any quadratic character~$\chi$, the same bound for~$S_{f,\chi,P}(N)$ holds, but rather over the extended range~$p^{1/2+\varepsilon} \leq N \leq R$.

\subsubsection{A power saving for certain $1$-bounded $f$}
When $f$ is a Dirichlet character, we get a power savings compared to the trivial bound in certain ranges of $R$. More precisely, we prove the following estimate.
\begin{thm}\label{thm:dirichlet}
Let $R$ be as in \eqref{eqn:R}, and assume that
\begin{equation}\label{eq:Rweaker}
R\geq p^{1/2}\exp\(2.1\log p/\log \log p\).
\end{equation}
Then, for any fixed Dirichlet character $\psi$, we have
    \begin{equation*}
S_{\psi,\chi,P}(N) \ll p^{1/12}R^{5/6}(\log R)(\log \log R)^{1/3}.
\end{equation*}
\end{thm}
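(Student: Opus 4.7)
The plan is to adapt the bilinear-form framework underlying Theorem~\ref{thm:divbdd} (itself an analog of Korolev--Shparlinski \cite{KorShp}) to the Dirichlet-character case. The key simplification is that $\psi$ is completely multiplicative and $1$-bounded, which bypasses the $\tau_\nu$-losses of Theorem~\ref{thm:divbdd} and opens the way to power savings.

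First, I would apply Ramar\'e's identity with an optimized prime cutoff $z$, converting $S_{\psi,\chi,P}(N)$ into a bilinear form:
\[
\Omega_z \cdot S_{\psi,\chi,P}(N) = \sum_{p \leq z} \psi(p) \sum_{m \leq N/p} \psi(m) \chi(\psi_{pm}(P)) + (\text{sieve error}),
\]
where $\Omega_z \sim \log\log z$. Hypothesis~\eqref{eq:Rweaker} would be invoked to keep the sieve error under control. A single Cauchy--Schwarz in the prime variable then reduces the square of this bilinear sum to a correlation sum of the form
\[
\sum_{m_1, m_2 \leq N/z} \psi(m_1) \overline{\psi(m_2)} \sum_{p \leq z} \chi(\psi_{pm_1}(P)) \overline{\chi(\psi_{pm_2}(P))}.
\]

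The central step would be to bound the inner correlation sum using Lemma~\ref{lem:corel}. Here I would exploit the division-polynomial identities relating $\psi_{pm_j}(P)$ to $\psi_p$ evaluated at the shifted point $[m_j]P$ (via the factorization $[pm_j]P = [p]([m_j]P)$ on $E$), which converts the multiplicative perturbation $m_1 \mapsto m_2$ into an additive-style shift at the level of base points $[m_j]P$. After this reduction, Lemma~\ref{lem:corel} should deliver square-root cancellation for generic off-diagonal pairs, while the diagonal $m_1 = m_2$ contributes only a trivial count. Substituting back into the Cauchy--Schwarz bound and optimizing the cutoff $z$ is designed to produce the claimed bound $p^{1/12} R^{5/6}$; the logarithmic factor $(\log R)(\log\log R)^{1/3}$ would arise from combining the dyadic decomposition loss with the Ramar\'e normalization.

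The hardest part will be the reduction from multiplicative to additive shifts in the correlation sum: the identity $[pm_j]P = [p]([m_j]P)$ introduces auxiliary factors depending on both $\psi_p$ and $\psi_{m_j}$, and one must check that Lemma~\ref{lem:corel} applies uniformly across the resulting varying base points $[m_j]P$. An additional subtlety is that the technical hypothesis~\eqref{eq:Rweaker}, slightly stronger than $R \geq p^{1/2+\varepsilon}$, likely reflects the need for a sufficient supply of primes $p \leq z$ to make the Ramar\'e identity effective once the sieve error is traded against the bilinear estimate.
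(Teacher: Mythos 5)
Your proposal takes a genuinely different and much more elaborate route than the paper, and as written it cannot produce the stated bound. The paper's proof is a three-line argument: a Dirichlet character $\psi$ of modulus $q$ is \emph{periodic}, not merely multiplicative, so one writes
\[
S_{\psi,\chi,P}(N)=\sum_{k\leq q}\psi(k)\sum_{\substack{n\leq N\\ n\equiv k \pmod q}} \chi(\psi_{n}(P)),
\]
and applies Lemma~\ref{lem:sumoverkmodd} to each of the $q=O(1)$ inner sums. This exploits the fixed modulus $q$ and immediately yields a bound that is \emph{independent of $N$}. Your plan only uses the (complete) multiplicativity of $\psi$ and therefore misses the periodicity that makes the problem easy.

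Beyond being overengineered, the bilinear approach has concrete obstructions. First, the target bound $p^{1/12}R^{5/6}(\log R)(\log\log R)^{1/3}$ has no $N$-dependence; any Ramar\'e/Cauchy--Schwarz argument leaves a diagonal contribution proportional to $N$ (or $N\sqrt{\pi(z)}$ after normalization) that cannot be killed. Second, after Cauchy--Schwarz in the prime variable as you describe, the inner correlation $\sum_{p\le z}\chi(\psi_{pm_1}(P))\overline{\chi(\psi_{pm_2}(P))}$ runs over the \emph{prime} variable, while Lemma~\ref{lem:corel} bounds correlations running over a long variable $n$ with two \emph{fixed distinct primes} $\ell_1,\ell_2$ as multipliers and a \emph{fixed} base point $P$; it does not apply to a short sum over primes with varying base points $[m_j]P$, nor does the paper supply a Weil-type input over incomplete prime sums. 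Third, even if you Cauchy--Schwarz in $m$ instead so that Lemma~\ref{lem:corel} becomes formally applicable (with $\ell_1,\ell_2$ playing the role of $p_1,p_2\leq z$), the factor $\max\{\ell_1,\ell_2\}$ in that lemma forces $z$ to be small, and the diagonal term again dominates at size $\asymp N$, so no power saving in $N$-free form results. In short, the missing idea is the residue-class decomposition mod $q$ together with Lemma~\ref{lem:sumoverkmodd}; the bilinear machinery of Theorem~\ref{thm:divbdd} is not the right tool here.
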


One can easily note from the argument in Section~\ref{sec:concludingThm1} that the weaker condition \eqref{eq:Rweaker} does not work in Theorem~\ref{thm:divbdd}.

Furthermore, we study the sums twisted by $f=\mu^2$, where as usual, $\mu$ denotes the M{\"o}bius functions, and obtain a power savings in certain ranges. For this, we study certain Type-I sums associated with $\chi(\psi_n(P))$, as done for the proof of \cite[Theorem 1.1]{ShaShpWij}, and prove the following estimate.

\begin{thm}\label{thm:sqfree}
   Let $R$ and $\varepsilon$ be as in \eqref{eqn:Rcondition}. Then for any integer $1\leq N\leq R$, we have
\begin{align*}
S_{\mu^2,\chi,P}(N)&\ll N^{1/2}p^{1/24}R^{5/12}(\log R)^{3/2}(\log \log R)^{1/6}\\
&\qquad\qquad+Np^{-\varepsilon/4}\exp\((\log 2+o(1))\log R/\log \log R\).
\end{align*}
\end{thm}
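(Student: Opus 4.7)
The plan is to combine squarefree detection with Type-I bilinear estimates for the sequence $\chi(\psi_n(P))$, following the strategy used in \cite[Theorem~1.1]{ShaShpWij}. The starting point is the classical identity $\mu^2(n) = \sum_{d^2 \mid n} \mu(d)$; swapping the order of summation gives
\[
S_{\mu^2,\chi,P}(N) = \sum_{d \le \sqrt{N}} \mu(d) \sum_{m \le N/d^2} \chi\bigl(\psi_{d^2 m}(P)\bigr).
\]
I would split this sum at a parameter $D \in [1,\sqrt{N}]$, to be optimised at the end, and handle the ranges $d \le D$ and $d > D$ separately.

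For the range $d \le D$, I would regard the double sum as a Type-I bilinear form and apply the Cauchy--Schwarz inequality in the $d$-variable. After squaring, the diagonal $m_1 = m_2$ yields an acceptable contribution of order $ND$, while the off-diagonal terms reduce to correlation sums of the form
\[
\sum_{d \le D} \chi\bigl(\psi_{d^2 m_1}(P)\bigr)\overline{\chi\bigl(\psi_{d^2 m_2}(P)\bigr)}.
\]
These can be controlled via the correlation property of Lemma~\ref{lem:corel}, exactly as in the proof of Theorem~\ref{thm:dirichlet}. Since Cauchy--Schwarz essentially takes the square root of the Theorem~\ref{thm:dirichlet} bound at the cost of an extra $N^{1/2}$ factor, this is what produces the first error term $N^{1/2}p^{1/24}R^{5/12}(\log R)^{3/2}(\log\log R)^{1/6}$; the extra logarithmic factors arise from the completion/divisor bookkeeping through \eqref{eq:sumtau}.

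For the tail $d > D$, the naive bound $\sum_{D<d\le\sqrt{N}} N/d^2 \ll N/D$ is too lossy. Instead, I would apply Theorem~\ref{thm:divbdd} directly to each inner sum $\sum_{m \le N/d^2} \chi(\psi_{d^2 m}(P))$: viewed as a character sum in $m$, this subsequence has suitable periodicity with respect to $R$, and the correlation input of Lemma~\ref{lem:corel} continues to apply, producing a nontrivial gain of size $p^{-\varepsilon/4}$ under \eqref{eqn:Rcondition}. Summing over $D < d \le \sqrt{N}$ with $|\mu(d)| \le 1$, and invoking the divisor-type estimate \eqref{eq:sumtau}, yields the factor $\exp\bigl((\log 2 + o(1)) \log R /\log\log R\bigr)$, which reflects the maximal order of $2^{\omega(n)}$-type averages. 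Balancing the two contributions by an optimal choice of $D$ produces the stated bound.

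The main obstacle is establishing the Type-I correlation estimate uniformly in $d$: Lemma~\ref{lem:corel} has to be applied not to the base sequence $\chi(\psi_n(P))$ but to each rescaled variant $m \mapsto \chi(\psi_{d^2 m}(P))$, and the non-degeneracy conditions underpinning the Weil-type bounds inherited from \cite{ShSt} must be verified uniformly in $d$ and in the shift variables $m_1, m_2$. Controlling the exceptional set of triples $(d, m_1, m_2)$ on which the correlation estimate degenerates is the key technical hurdle, and it is precisely the point where the Type-I machinery of \cite{ShaShpWij} would have to be carefully adapted to the elliptic setting.
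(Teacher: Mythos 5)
There is a genuine gap, and the decomposition you propose is structurally different from the one that actually makes the proof work. The paper's key move is to split according to $\gcd(d,R)$, not according to the size of $d$: write $S_{\mu^2,\chi,P}(N) = T_1 + T_2$, where $T_1$ collects the $d$ with $\gcd(d,R)\le p^{\varepsilon/4}$ and $T_2$ the rest. For $T_1$ the paper does not apply Cauchy--Schwarz and does not need any correlation estimate at all; it simply applies Lemma~\ref{lem:chimn} \emph{directly} to the inner sum $\sum_{n\le N/d^2}\chi(\psi_{d^2 n}(P))$ for each fixed $d$ (dyadically grouped), which is admissible precisely because $\gcd(d^2,R)\le p^{\varepsilon/2}$ guarantees that $d^2P$ still has order $\ge p^{1/2+\varepsilon/2}$ via Lemma~\ref{lem:mult} and Lemma~\ref{lem:sumoverkmodd}. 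Balancing this against the trivial bound $N/D$ gives the first term. For $T_2$ there is no analytic input at all: one just observes that the number of $n\le N$ divisible by some divisor of $R$ exceeding $p^{\varepsilon/4}$ is $\le Np^{-\varepsilon/4}\tau(R)$, and the factor $\exp((\log 2+o(1))\log R/\log\log R)$ is exactly the classical bound for $\tau(R)$, not anything coming from $\tau_\nu$-averages or from \eqref{eq:sumtau}.

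Your sketch runs into trouble at both halves. In the $d\le D$ range you want to reduce to correlation sums $\sum_{d\le D}\chi(\psi_{d^2m_1}(P))\overline{\chi(\psi_{d^2m_2}(P))}$, but Lemma~\ref{lem:corel} is a bound on sums over the \emph{shifted index} $n$ with fixed multipliers $\ell_1,\ell_2$; here you would be summing over $d$ (appearing squared inside the division-polynomial index) with the multipliers $m_1,m_2$ fixed, and none of the lemmas in Section~2 cover that sum. (You also assert that Theorem~\ref{thm:dirichlet} is proved via Lemma~\ref{lem:corel}; in fact it uses Lemma~\ref{lem:sumoverkmodd}.) In the tail $d>D$, applying Theorem~\ref{thm:divbdd} to $\sum_{m\le N/d^2}\chi(\psi_{d^2m}(P))$ would only produce a logarithmic-type saving in $N/d^2$, nowhere near the $p^{-\varepsilon/4}$ factor in the target, and that theorem is also not directly applicable because the index variable carries the fixed coefficient $d^2$. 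You correctly flag the uniformity-in-$d$ problem as ``the key technical hurdle,'' but the resolution is not a refined Type-I bilinear argument: it is the $\gcd(d,R)$ cutoff combined with the passage from $P$ to $d^2P$ in Lemma~\ref{lem:chimn}, and a trivial count for the exceptional $d$.
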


It is evident from the proof of Theorem~\ref{thm:sqfree} that a slightly weaker condition on $R$ can also be imposed. However, this comes at the expense of a weaker bound in the second term of the estimate. Furthermore, the same argument applies for estimating $S_{\mu^{(k)}, \chi, P}$, where $\mu^{(k)}$ denotes the characteristic function of $k$-free numbers.

To understand the strengths of Theorem~\ref{thm:dirichlet} and Theorem~\ref{thm:sqfree} compared to Theorem~\ref{thm:divbdd}, at least for $1$-bounded multiplicative functions, consider the range of $N$ in \eqref{eqn:Nfor1bdd}. Certainly, within this range, both theorems save a power compared to the trivial bounds. In fact, over this range of $N$, we have a simplified estimate 
$$S_{\mu^2,\chi,P}(N)\ll Np^{-\varepsilon/4}\exp\((\log 2+o(1))\log R/\log \log R\).$$

\subsubsection{An estimation over smooth numbers}

We begin by recalling that a positive integer $n$ is called $y$-smooth if $P(n) \le y$, where $P(n)$ denotes the largest prime divisor of $n$. For $2 \le y \le N$, we denote by $\mathcal{S}(N,y)$ the set of $y$-smooth integers up to $N$. As usual, denote $\Psi(N,y) =\sharp\, \mathcal{S}(N,y)$. 

Let $\Psi_y$ denote the characteristic function of $y$-smooth numbers. Clearly, $\Psi_y$ is multiplicative, and we are interested in the sum
$$S_{\Psi_y,\chi,P}(N) = \sum_{n \in \mathcal{S}(N,y)} \chi(\psi_n(P)),$$
which admits the trivial upper bound $S_{\Psi_y,\chi,P}(N) \ll \Psi(N,y)$.

For various estimates of $\Psi(N,y)$ in various ranges of $y$, the reader may refer to \cite{Granv, Harp, HilTen86, HilTen}. In this article, we use the following estimate, which is stated in a form enough for our purposes. For some $\alpha(N,y)$, it is well known that
\begin{equation}\label{eq:psiNy}
\Psi(N,y) = N^{\alpha(N,y) + o(1)},
\end{equation}
which, for example, follows from \cite[Theorem 1]{HilTen86}, also mentioned in \cite[Section 1.3]{ShaShpWij}. Here $\alpha(N,y)$ satisfies 
\begin{equation}\label{eq: Psi N-alpha}
\alpha(N,y) = \(1+o(1)\)\frac{\log(1 + y/\log N)}{\log y}.
\end{equation}
See \cite[Theorem 2]{HilTen86} for a more precise asymptotic formula for $\alpha(N,y)$.

In particular, it is certainly useful to note the following facts
\begin{itemize}
    \item If $\log y / \log \log N \to \infty$, then $\alpha(N,y) = 1 + o(1)$.
    \item If $y = (\log N)^K$ for some fixed $K \ge 1$ (i.e., the case of \textit{very smooth} numbers), then
    $\alpha(N,y) = 1 - \frac{1}{K} + o(1)$, which also reflects the sparsity of \textit{very smooth}  numbers. 
\end{itemize}

\begin{thm}\label{thm:ysmooth}
   Let $R$ be as in \eqref{eq:Rweaker}, and $1\leq N\leq R$ be any integer. Then for any $y\ge (\log N)^{\frac{3+\sqrt{5}}{2}}$, we have
\begin{equation}\label{eq:charysmoothbound}
 S_{f_y,\chi,P}(N) \ll  y^{1/2} p^{\frac{1}{24(\alpha+2)}} R^{\frac{5}{12(\alpha+2)}} N^{-\gamma} \Psi(N,y) N^{o(1)}, 
\end{equation}
where  
$$\alpha=\alpha(N,y),\quad \mathrm{and}\quad\gamma=\frac{\alpha^2+\alpha-1}{2(\alpha+2)}.
$$
\end{thm}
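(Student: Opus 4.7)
I will adapt the Type-I sum methodology from~\cite{ShaShpWij}, which underpins Theorem~\ref{thm:sqfree}, to the indicator $f_y$ of $y$-smooth integers. The guiding idea is to use a multiplicative decomposition of the smoothness condition to produce a bilinear form in $\chi(\psi_n(P))$, and then extract savings from the correlation estimate supplied by Lemma~\ref{lem:corel}, in the same spirit as the $p^{1/12} R^{5/6}$ bound driving Theorem~\ref{thm:dirichlet} and the $p^{1/24} R^{5/12}$ term in Theorem~\ref{thm:sqfree}.

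\textbf{Bilinear reduction and Cauchy--Schwarz.} Fix a parameter $M \in [1, N]$. Every $n \in \mathcal{S}(N, y)$ with $n \ge M$ admits a greedy factorization $n = ab$ with $a \in [M, My]$ and both $a, b$ being $y$-smooth (multiply the prime factors of $n$ in some fixed order until the running product first exceeds $M$), while the tail with $n < M$ contributes at most $\Psi(M, y) = M^{\alpha + o(1)}$ by~\eqref{eq:psiNy}. The main piece becomes
\[
\sum_{M \le a \le My} \alpha_a \sum_{b \le N/a} \beta_{a,b}\, \chi(\psi_{ab}(P)),
\]
with coefficients $\alpha_a, \beta_{a,b} \in \{0, 1\}$ encoding smoothness and the uniqueness of the greedy cut. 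Cauchy--Schwarz in $a$ over the range of length $\asymp My$ produces the $y^{1/2}$ factor and opens the inner square into off-diagonal correlations
\[
\sum_{b_1, b_2} \overline{\beta_{b_1}} \beta_{b_2} \sum_a \chi(\psi_{ab_1}(P))\, \overline{\chi(\psi_{ab_2}(P))},
\]
whose diagonal $b_1 = b_2$ is estimated trivially and whose off-diagonal is controlled by Lemma~\ref{lem:corel}.

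\textbf{Parameter optimization and threshold.} The parameter $M$ is chosen to balance the tail $M^{\alpha + o(1)}$ against the bilinear bound obtained above; the balanced choice is of the form $M \approx N^{1/(\alpha+2)}$ up to subpolynomial factors, and it produces the exponents $1/(24(\alpha+2))$ on $p$, $5/(12(\alpha+2))$ on $R$, and the $N^{-\gamma}$ saving with $\gamma = (\alpha^2 + \alpha - 1)/(2(\alpha+2))$ over the trivial size $\Psi(N, y)$. The hypothesis $y \ge (\log N)^{(3+\sqrt{5})/2}$ is precisely equivalent, via $\alpha = 1 - 1/K + o(1)$ for $y = (\log N)^K$ from~\eqref{eq: Psi N-alpha}, to $\alpha > (\sqrt{5} - 1)/2$, which is the root of $\alpha^2 + \alpha - 1 = 0$ and therefore the threshold above which $\gamma > 0$.

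\textbf{Main obstacle.} The crux of the argument is the Cauchy--Schwarz step: one must obtain a clean off-diagonal correlation bound for $\sum_a \chi(\psi_{ab_1}(P))\, \overline{\chi(\psi_{ab_2}(P))}$ uniformly in the pair $(b_1, b_2)$ while carrying the smoothness constraints through the expansion of the square. Secondarily, one must check that the combinatorial losses from the greedy decomposition remain subpolynomial in $N$ so as not to erode the savings. The interplay between the Cauchy--Schwarz loss, the length of the bilinear ranges, and the density $\alpha$ of smooth numbers is exactly what produces the precise exponents featuring $\alpha + 2$ in the denominators.
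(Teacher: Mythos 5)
Your overall strategy is the same as the paper's: a greedy factorization $n = ab$ with $a \in (M, P(a)M]$ and $p(b) \ge P(a)$, dyadic localization, Cauchy--Schwarz over the short variable to produce a bilinear correlation sum, a pointwise correlation estimate for the off-diagonal, and balancing against the tail $\Psi(M,y)$. Up to notation ($a \leftrightarrow \ell$, $b \leftrightarrow m$, $M \leftrightarrow L_0$) this is exactly the path taken in the paper, following \cite{ShaShpWij}. However, there are two substantive issues.

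First, you invoke Lemma~\ref{lem:corel} to control the off-diagonal $\sum_{a} \chi(\psi_{ab_1}(P))\overline{\chi(\psi_{ab_2}(P))}$ with $b_1 \ne b_2$. Lemma~\ref{lem:corel} requires the outer indices to be distinct \emph{primes}; here $b_1, b_2$ are generic $y$-smooth integers (subject only to $p(b_i) \ge P(a)$), which are composite in general. The estimate you actually need is \eqref{eq:chim1nm2n}, which holds for arbitrary distinct integers $m_1, m_2 \le R$ at the mild cost of a worse logarithmic factor that is absorbed into $N^{o(1)}$. This is precisely the substitution the paper makes. Relatedly, to decouple the constraint $p(b) \ge P(a)$ from the Cauchy--Schwarz step, one must first split by $q = P(a)$ (and sum over $q \le y$); the $\sum_q 1 \le y$ factor is one of the sources of the $y^{1/2}$ in the final bound, not solely the Cauchy--Schwarz loss over the $a$-range, so the bookkeeping in your sketch is off at this point.

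Second, your stated balancing choice $M \approx N^{1/(\alpha+2)}$ is not correct. Balancing $M N^{-\alpha}$ against $p^{1/12} R^{5/6} M^{-(\alpha+1)} N$ (after bounding $\Psi(M,y), \Psi(N/M,y)$ via Lemma~\ref{lem:psismallN}) gives $M \asymp p^{\frac{1}{12(\alpha+2)}} R^{\frac{5}{6(\alpha+2)}} N^{\frac{\alpha+1}{\alpha+2}}$. The $N$-exponent is $(\alpha+1)/(\alpha+2)$, not $1/(\alpha+2)$, and since $p, R$ are themselves power-of-$N$ size the $p$- and $R$-dependencies cannot be waved away as subpolynomial. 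Your identification of the threshold $\alpha > (\sqrt{5}-1)/2$ and the resulting $\gamma = (\alpha^2+\alpha-1)/(2(\alpha+2))$ is correct, and once the two points above are repaired the argument proceeds as in the paper.
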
  

The condition on $y$ in Theorem~\ref{thm:ysmooth} is needed, because the estimate is meaningful when $\gamma > 0$, that is, when $\alpha(N, y) > \frac{\sqrt{5} - 1}{2}$. Furthermore, we obtain a power-saving in \eqref{eq:charysmoothbound}, provided that $N$ is in the range
\[N^{\alpha^2 + \alpha - 1} > p^{1/12} R^{5/6 + \varepsilon}, \quad \text{for some } \varepsilon > 0.\]

In fact, since $N\le R$ and $R\ll p$, we must then need that $\alpha^2+\alpha>23/12$, i.e., we need $\alpha>0.972$, in other words $y\ge (\log N)^{35.72}$.

In particular, when $\log y/\log \log N \to \infty$, we have $\alpha=1+o(1)$, and in that case we have a cleaner estimate of the form
$$S_{f_y,\chi,P}(N)\ll y^{1/2}p^{1/72}R^{5/36}N^{-1/6+o(1)}\Psi(N,y),$$
which is certainly non-trivial in the range $N\gg y^3p^{1/12}R^{5/6+3\varepsilon}$, for some $\varepsilon>0$, and also saves a power compared to the trivial bound. However, again since $N\le R$, we need $y\ll R^{1/18-\varepsilon}p^{-1/36}$.

\subsection{Notations} Throughout this article, as usual, the notations $U = O(V)$ and $U \ll V$ are equivalent to $|U|\le c V$ for some positive constant $c$, which may depend on $d$ (the order of $\chi$), and $q$ (the modulus of the Dirichlet character $\psi$).

We use $\tau(n)$ to denote the number of distinct prime and positive integer factors of an integer $n\ne 0$ and use $\varphi(n)$ to denote the Euler function.

We denote $\er(n) = \exp(2 \pi i n/R)$. Finally, we use $\sharp\, \cS$ to denote the cardinality of a finite set $\cS$, and throughout the rest of the article, we write $\sum\limits_{n\leq N}$ to denote the summation over positive integer $1 \leq n\leq  N$.

\section{Character sums with division polynomials}\label{sec:elliptic}
\subsection{Preliminary}
In this section, we establish the key results involving the division polynomials, which are needed for proving the results stated in Section~\ref{sec:results}.

As usual, for any $\Psi\in \F_p(E)$, we define
\[
\deg \Psi=\sum_{\substack{P\in E(\overline{\F_p})\\ \nu_P(\Psi)>0}} \nu_{P}(\Psi),
\]
where  $\nu_{P}(\Psi)$ is the multiplicity of $P$ as a zero of $\Psi$ (and thus this is a finite sum over 
all zeros of $\Psi$). 

Let us again recall $R$ from \eqref{eqn:R}. Note that,~\cite[Lemma~3.1]{ShSt} for $s=d$ implies that the sequences $\chi(\psi_n(P))$ is periodic with a period dividing $R$, as long as $\ord P\geq 3$. Recall the following estimate from~\cite[Lemma~2.3]{BhShp25}. 

\begin{lem}\label{lem:key}
Let $\chi$ be any non-principal multiplicative character of $\F_p^{*}$, and $R$ be as in~\eqref{eqn:R}. Assume that $\Psi$ is not a non-trivial power of a function in $\overline{\F_p(E)}$. Then, we have the following estimate uniformly over $a\in \Z$
$$\sum_{n\leq R} \chi(\Psi(nP))\er(an)\ll  \deg \Psi \cdot \sqrt{p}.$$
\end{lem}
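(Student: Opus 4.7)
The plan is to combine a periodicity reduction with a Weil--Deligne style bound for character sums on the elliptic curve $E$.

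First I would exploit that $n \mapsto \chi(\Psi(nP))$ is periodic modulo $h := \ord P$, while $n \mapsto \er(an)$ has period $R = dh$. Writing $n = jh + k$ with $0 \le j < d$ and $1 \le k \le h$, the sum factors as
\begin{equation*}
\sum_{n \le R} \chi(\Psi(nP))\, \er(an)
= \Bigl(\sum_{j=0}^{d-1} e^{2\pi i aj/d}\Bigr) \sum_{k=1}^{h} \chi(\Psi(kP))\, \er(ak).
\end{equation*}
The $j$-sum vanishes unless $d \mid a$, in which case (absorbing the harmless factor $d$ into the implied constant, as the notation permits) the target estimate reduces to showing
\begin{equation*}
T(b) := \sum_{k=1}^{h} \chi(\Psi(kP))\, e^{2\pi i bk/h} \ll \deg \Psi \cdot \sqrt{p}
\end{equation*}
uniformly in $b \in \Z$.

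To bound $T(b)$, I would regard the map $kP \mapsto e^{2\pi i bk/h}$ as a character $\phi$ of $\langle P \rangle$, extend it to a character $\tilde\phi$ of $E(\F_p)$, and then average the indicator of $\langle P \rangle$ over characters $\eta$ of $E(\F_p)/\langle P \rangle$ to rewrite
\begin{equation*}
T(b) = \frac{1}{[E(\F_p):\langle P \rangle]} \sum_\eta \sum_{Q \in E(\F_p)} \chi(\Psi(Q))\, \tilde\phi(Q)\, \eta(Q).
\end{equation*}
Each inner sum is a multiplicative-character sum on $E$ twisted by a finite-order character of $E(\F_p)$; via class field theory for curves, such a character corresponds to a rank-one lisse sheaf on $E$ (a Lang torsor), so the twisted sheaf $\mathcal{L}_{\chi \circ \Psi} \otimes \mathcal{L}_{\tilde\phi \eta}$ is a geometric object of controlled Euler characteristic and Deligne's Riemann hypothesis delivers the required square-root cancellation. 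The outer prefactor then cancels the number of $\eta$'s.

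The main obstacle I foresee is verifying the two ingredients needed for the Weil--Deligne bound: non-triviality of the twisted sheaf, and a bound on its Euler characteristic in terms of $\deg \Psi$. The hypothesis that $\Psi$ is not a non-trivial power in $\overline{\F_p(E)}$ is precisely what ensures that $\mathcal{L}_{\chi \circ \Psi}$ is geometrically non-trivial, and since Lang-torsor twists are geometrically trivial they cannot cancel this non-triviality; the Euler-characteristic bound is then standard, with the $\deg \Psi$ appearing because the wild and tame ramification of $\mathcal{L}_{\chi \circ \Psi}$ are both controlled by the divisor of $\Psi$. If one wished to avoid the class-field-theoretic language entirely, an alternative would be to open $|T(b)|^2$, change variables $k = l + m$, and reduce to estimating $\sum_l \chi\bigl(\Psi((l+m)P)/\Psi(lP)\bigr)$ for fixed $m \ne 0$; this is again a subgroup sum, but without an additive twist, and the Fourier expansion in $\eta$ becomes trivial on the nontrivial components because there is no $\tilde\phi$ to pair with.
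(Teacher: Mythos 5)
Your periodicity reduction is fine: since $\Psi\in\F_p(E)$, the sequence $n\mapsto\chi(\Psi(nP))$ is genuinely $\ord P$-periodic, the factorisation over $n=jh+k$ is correct, and absorbing a factor of $d$ into the implied constant is allowed. Expanding the indicator of $\langle P\rangle$ over the characters of $E(\F_p)$ is also a standard device. Note, however, that the paper does not prove this lemma in-house; it quotes it directly from~\cite{BhShp25}, and the technology in that line of work (Shparlinski--Stange onward) is essentially the completion-and-squaring route you mention only at the end.

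The real problem is the assertion that Lang-torsor twists are geometrically trivial. That is false. Over $\overline{\F_p}$ the Lang isogeny $\lambda=F-1\colon E\to E$ is a \emph{connected} etale cover of degree $\#E(\F_p)$, so it gives a surjection $\pi_1^{\mathrm{geom}}(E)\twoheadrightarrow E(\F_p)$, and the pushout $\mathcal L_\eta$ is geometrically nontrivial for every nontrivial $\eta$. What is true, and what you actually need, is that $\mathcal L_{\tilde\phi\eta}$ is \emph{lisse everywhere} on $E$, while $\mathcal L_{\chi\circ\Psi}$ is ramified at any zero or pole of $\Psi$ whose multiplicity is not a multiple of $d$; when such a point exists the tensor is ramified, hence geometrically nontrivial, and Deligne applies. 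But the hypothesis ``$\Psi$ is not a nontrivial power'' does not force such a point to exist: if every multiplicity of $\Psi$ is divisible by $d$ yet $\Psi$ is not a $d$-th power (equivalently, $\tfrac1d\mathrm{div}\,\Psi$ is a nonzero $d$-torsion class in $\mathrm{Pic}^0 E$), then $\mathcal L_{\chi\circ\Psi}$ is lisse on all of $E$ and is itself a Lang sheaf up to a scalar, so it can exactly cancel some $\mathcal L_{\tilde\phi\eta}$ and that term contributes on the order of $\#E(\F_p)$, not $\sqrt p$. Concretely, take $\chi$ quadratic, $T=(e,0)$ a rational $2$-torsion point and $\Psi=x-e$: this $\Psi$ is not a nontrivial power, yet by $2$-descent $\chi(\Psi(Q))$ is a character of $E(\F_p)$, so $\chi(\Psi(nP))=\chi(\Psi(P))^n$, and $\sum_{n\le R}\chi(\Psi(nP))\er(an)$ has size $\asymp R$ for a suitable $a$. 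So your argument (and, as written, the literal statement of the lemma) needs an extra hypothesis along the lines of ``$\Psi$ has a zero or pole of multiplicity coprime to $d$''; the paper's applications satisfy this automatically because the $\psi_n$ have only simple zeros, but your proof should say so explicitly rather than lean on the false geometric-triviality claim.
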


We also need the following lemma to show that the functions of our requirements satisfy the condition of Lemma~\ref{lem:key}.
\begin{lem}\label{lem:non-power}
    Let $m_1,m_2,n_1,n_2$ be positive integers satisfying
    \begin{equation}\label{eqn:gcd}
    \gcd(m_1n_1,m_2n_2)=\gcd(m_1n_2,m_2n_1).
    \end{equation}
    Suppose further that
    \begin{equation}\label{eqn:distinct}
        m_1 \neq m_2 \quad \text{and} \quad n_1 \neq n_2.
    \end{equation}
    Then, the function 
    $$\Psi(Q)=\psi_{m_1n_1}\cdot \psi_{m_2n_2}\cdot \psi_{m_1n_2}^{-1}\cdot \psi_{m_2n_1}^{-1}(Q),$$
    is not a non-trivial power of any function in $\overline{\F_p(E)}$.
\end{lem}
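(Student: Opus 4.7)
The plan is to analyse the divisor $\mathrm{div}(\Psi)$ on $E(\overline{\F_p})$ and exhibit a point at which its local multiplicity equals $\pm 1$, which forces the greatest common divisor of all local multiplicities to be $1$ and consequently precludes $\Psi$ from being a nontrivial $k$-th power in $\overline{\F_p(E)}$ for any $k \geq 2$.

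The starting point is the standard formula $\mathrm{div}(\psi_n) = \sum_{T \in E[n]\setminus\{O\}}(T)-(n^2-1)(O)$ (at least for $\gcd(n,p)=1$), which gives, for any non-identity $T$ of exact order $t$ coprime to $p$, the local multiplicity $\nu_T(\psi_n)=\mathbf{1}_{t\mid n}$. Therefore
\[
\nu_T(\Psi)=\mathbf{1}_{t\mid m_1n_1}+\mathbf{1}_{t\mid m_2n_2}-\mathbf{1}_{t\mid m_1n_2}-\mathbf{1}_{t\mid m_2n_1}.
\]
The key observation is that, under hypothesis~\eqref{eqn:gcd}, the two conjunctions ``$t\mid m_1n_1$ and $t\mid m_2n_2$'' and ``$t\mid m_1n_2$ and $t\mid m_2n_1$'' both amount to $t$ dividing the common gcd and are therefore equivalent. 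A short case check then shows that this logical equivalence forces $\nu_T(\Psi)\in\{-1,0,1\}$ for every such $T$.

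At the identity, using the pole order $-(n^2-1)$ of $\psi_n$, one computes
\[
\nu_O(\Psi)=-(m_1n_1)^2-(m_2n_2)^2+(m_1n_2)^2+(m_2n_1)^2=(m_1^2-m_2^2)(n_2^2-n_1^2),
\]
which is nonzero by~\eqref{eqn:distinct}. Since $\mathrm{div}(\Psi)$ is principal and hence of degree zero, the sum of all local multiplicities at non-identity points equals $-\nu_O(\Psi)\neq 0$. Combined with the previous paragraph, at least one non-identity $T$ must satisfy $|\nu_T(\Psi)|=1$, so the gcd of all local multiplicities of $\mathrm{div}(\Psi)$ is $1$, and the claim follows.

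The main obstacle is the degenerate scenario where every non-identity point with $\nu_T(\Psi)\neq 0$ has order divisible by $p$ -- this can occur when $m_1,m_2$ (or $n_1,n_2$) share the same $p$-free part. In that case one needs to refine the divisor analysis of $\psi_n$ at the inseparable $p$-torsion; I expect to handle this either by passing to the $p$-free analogues of the $m_i,n_j$ (verifying that~\eqref{eqn:gcd} is preserved after the rescaling, so that the argument above applies to the reduced quadruple) or by directly ruling out such degenerations via a $p$-adic valuation check against~\eqref{eqn:gcd} combined with~\eqref{eqn:distinct}.
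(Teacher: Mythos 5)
Your argument is correct, and it reaches the same contradiction as the paper while packaging it differently. Both proofs rest on the divisor identity $\mathrm{div}(\psi_n) = \sum_{T\in E[n]\setminus\{O\}}(T) - (n^2-1)(O)$, and both exploit \eqref{eqn:gcd} to cancel the ``overlap'' contributions before invoking \eqref{eqn:distinct}. The paper reasons by comparing the two sides of $\Psi$ directly: it counts simple zeros of $\psi_{m_1n_1}\psi_{m_2n_2}$ and of $\psi_{m_1n_2}\psi_{m_2n_1}$, observes that if $\Psi=G^\nu$ with $\nu\ge 2$ then every simple zero of one product must be a simple zero of the other (and vice versa), so the two counts agree; the gcd terms then cancel via \eqref{eqn:gcd}, leaving $(m_1n_1)^2+(m_2n_2)^2 = (m_1n_2)^2+(m_2n_1)^2$, which factors as $(m_1^2-m_2^2)(n_1^2-n_2^2)=0$ and contradicts \eqref{eqn:distinct}. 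Your version instead works with $\mathrm{div}(\Psi)$ as a whole: \eqref{eqn:gcd} shows $|\nu_T(\Psi)|\le 1$ for $T\ne O$, \eqref{eqn:distinct} forces $\nu_O(\Psi)=(m_1^2-m_2^2)(n_2^2-n_1^2)\neq 0$, and the degree-zero property of a principal divisor then produces a non-identity point of multiplicity $\pm 1$, killing any $k$-th power structure. This sidesteps the paper's intermediate ``the two simple-zero sets coincide'' step and is arguably the more transparent route, since it reduces everything to a single global identity. Your closing caveat about points of order divisible by $p$ is well taken, but note that the paper's own proof rests on exactly the same $n^2-1$ simple-zero count for $\psi_n$ (citing Silverman's Exercise~III.3.7) and therefore carries the same tacit separability assumption; so this is a shared implicit hypothesis rather than a defect of your approach relative to the paper's.
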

\begin{proof}
For any integer $h$, note that $\psi_h$ has $h^2-1$ simple zeroes,
see~\cite[Exercise~III.3.7]{Silv2}.  

Clearly, then the product $\psi_{m_1n_1} \cdot \psi_{m_2n_2}$ has 
$$(m_1n_1)^2+(m_2n_2)^2-2\gcd(m_1n_1,m_2n_2)$$ many simple zeroes, and $\gcd(m_1n_1,m_2n_2)-1$ many zeroes of multiplicity $2$. Similarly, $\psi_{m_1n_2}^{-1} \cdot \psi_{m_2n_1}^{-1}$ has 
$$(m_1n_2)^2+(m_2n_1)^2-2\gcd(m_1n_2,m_2n_1)$$
many simple zeroes, and $\gcd(m_1n_2,m_2n_1)-1$ many of zeroes of multiplicity $2$. 

In particular, if $\Psi=G^{\nu}$ for some $\nu\geq 2$ and $G\in \overline{\F_p(E)}$, then each simple zero of $\psi_{m_1n_1} \cdot \psi_{m_2n_2}$ must also be a simple zero of $\psi_{m_1n_2}^{-1} \cdot \psi_{m_2n_1}^{-1}$, and the vice versa. Therefore, the total number of simple zeroes for both $\psi_{m_1n_1}\cdot \psi_{m_2n_2}$ and $\psi_{m_1n_2}^{-1} \cdot \psi_{m_2n_1}^{-1}$ coincides. We get a contradiction due to the imposed conditions at \eqref{eqn:gcd} and \eqref{eqn:distinct}.
\end{proof}

To establish the main result of this section, we need one more ingredient from~\cite[Lemma 3.2]{ShSt}, that demonstrates an \textit{almost multiplicative} nature of $\chi(\psi_n(P))$.

\begin{lem}\label{lem:mult}
    Let $\chi$ be a multiplicative character. Then for any integers $m,n$ we have
\[
\chi(\psi_{mn}(P))=\chi(\psi_m(nP))\chi(\psi_n(P))^{m^2}.
\]
\end{lem}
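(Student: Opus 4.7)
The plan is to reduce the lemma to the polynomial identity
\[
\psi_{mn}(P) \;=\; \psi_n(P)^{m^2}\,\psi_m\bigl([n]P\bigr),
\]
regarded as an equality of rational functions in $\F_p(E)$. Once this is in place, the statement of the lemma follows by applying $\chi$ to both sides and invoking its multiplicativity together with the convention $\chi(0)=0$ at the locus where either factor vanishes, since $\chi(\psi_n(P)^{m^2})=\chi(\psi_n(P))^{m^2}$.

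To establish the polynomial identity, I would compare divisors on $E(\overline{\F_p})$. Using the classical fact \cite[Exercise~III.3.7]{Silv2} that $\mathrm{div}(\psi_h^2) = \sum_{Q\in E[h]\setminus\{O\}}(Q) - (h^2-1)(O)$, together with the observation that $[n]\colon E\to E$ has degree $n^2$ and satisfies $[n]^{*}(O) = \sum_{Q\in E[n]}(Q)$ (separably when $\gcd(n,p)=1$; the inseparable case requires a brief Frobenius detour), a short calculation combining $m^2\,\mathrm{div}(\psi_n^2)$ with the pullback $[n]^{*}\mathrm{div}(\psi_m^2)$ gives
\[
\mathrm{div}\bigl(\psi_n^{2m^2}\,(\psi_m\circ[n])^2\bigr) \;=\; \sum_{Q\in E[mn]\setminus\{O\}}(Q) - (m^2n^2-1)(O) \;=\; \mathrm{div}(\psi_{mn}^2).
\]
Hence $\psi_{mn} = \varepsilon_m\,\psi_n^{m^2}\,(\psi_m\circ[n])$ with $\varepsilon_m\in\{\pm 1\}$.

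The main obstacle is then pinning down $\varepsilon_m=+1$ for every $m$. The base cases $m=1,2$ are direct from the explicit formulas: for $m=2$, the doubling relation $\psi_{2n}(P)\psi_2(P) = \psi_n(P)\bigl(\psi_{n+2}(P)\psi_{n-1}(P)^2-\psi_{n-2}(P)\psi_{n+1}(P)^2\bigr)$ combined with $\psi_2([n]P) = 2y([n]P)$ and the explicit formula for $y([n]P)$ from the introduction gives $\psi_{2n}(P) = \psi_n(P)^4\,\psi_2([n]P)$ exactly, so $\varepsilon_2=+1$; the case $m=3$ is checked similarly using the tripling formula and the formula for $x([n]P)$. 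To propagate these anchors to all $m$, I would substitute the provisional form $\psi_{kn}(P)=\varepsilon_k\psi_n(P)^{k^2}\psi_k([n]P)$ into the recurrence
\[
\psi_{a+b}(Q)\psi_{a-b}(Q)\psi_c(Q)^2 = \psi_{a+c}(Q)\psi_{a-c}(Q)\psi_b(Q)^2 - \psi_{b+c}(Q)\psi_{b-c}(Q)\psi_a(Q)^2,
\]
quoted in the introduction, with $Q=P$ and indices $(an, bn, cn)$. On every term the powers of $\psi_n(P)$ total $2a^2+2b^2+2c^2$ and therefore cancel, and matching with the same recurrence at $Q=[n]P$ with indices $(a,b,c)$ forces the sign relations $\varepsilon_{a+b}\varepsilon_{a-b}=\varepsilon_{a+c}\varepsilon_{a-c}=\varepsilon_{b+c}\varepsilon_{b-c}$, which together with the base cases $\varepsilon_1=\varepsilon_2=\varepsilon_3=+1$ cascade to $\varepsilon_m=+1$ for every $m$. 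Applying $\chi$ to the resulting polynomial identity then completes the proof.
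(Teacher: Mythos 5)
The paper does not prove this lemma at all: it simply quotes it from \cite[Lemma~3.2]{ShSt}, so any proof you give is going beyond what the paper does. Your plan — prove the polynomial identity $\psi_{mn}=\psi_n^{m^2}\,(\psi_m\circ[n])$ in $\F_p(E)$ and then apply $\chi$ — is the right one, and your divisor computation is correct as far as it goes.

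However, there is a genuine gap at the step ``Hence $\psi_{mn} = \varepsilon_m\,\psi_n^{m^2}\,(\psi_m\circ[n])$ with $\varepsilon_m\in\{\pm 1\}$.'' Matching divisors only shows that the two functions agree up to multiplication by a nonzero constant $c\in\overline{\F_p}^{\,*}$; nothing in the divisor argument forces $c=\pm 1$. (Squaring first and taking square roots makes it worse, not better: you then only know $\psi_{mn}=\pm\sqrt{c}\cdot\psi_n^{m^2}(\psi_m\circ[n])$ for an undetermined $c$.) Your entire recurrence-based sign cascade therefore never gets off the ground, since it presupposes the dichotomy it is trying to resolve. And the constant is not harmless after applying $\chi$, because you would pick up an extra factor $\chi(c)$.

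The correct way to pin down the constant — which also makes the sign-chasing unnecessary — is to compare leading behaviour at the identity $O$. Taking a local uniformizer $z$ at $O$ (e.g. $z=-x/y$), one has the classical expansion $\psi_h = h\,z^{-(h^2-1)}(1+O(z))$, and under the formal group $[n]$ sends $z$ to $nz+O(z^2)$. A short computation then shows both $\psi_{mn}$ and $\psi_n^{m^2}\,(\psi_m\circ[n])$ have leading term $mn\,z^{-(m^2n^2-1)}$, so $c=1$. This is clean when $p\nmid mn$; in the remaining case one should either note that the identity holds over $\Z[a,b,x,y]$ and reduce modulo $p$, or restrict to $p\nmid mn$ (which suffices for the applications in the paper). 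A second, smaller issue: your cascade argument implicitly needs the functions appearing in the recurrence to be linearly independent in order to match coefficients term by term, and you do not verify this. Fixing the constant via leading coefficients sidesteps both problems at once.

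Finally, as a matter of exposition, it would be cleaner to cite \cite[Lemma~3.2]{ShSt} (as the paper does) and, if a self-contained proof is desired, give the divisor-plus-leading-coefficient argument, which is shorter and avoids the delicate induction entirely.
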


\subsection{Correlations of character values}

\begin{lem}\label{lem:corel}
Let $R$ be as in~\eqref{eq:Rweaker}, and $1\leq N\leq R$ be any integer. Then, for any distinct primes $\ell_1,\ell_2\leq R$, we have
    $$\sum_{n \leq N} \chi(\psi_{\ell_1n}(P))\overline{\chi(\psi_{\ell_2n}(P))}\ll \max\{\ell_1,\ell_2\}p^{\frac{1}{12}}R^{5/6}(\log R)(\log \log R)^{1/3}.$$
\end{lem}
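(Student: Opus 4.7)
The strategy is to combine Lemma~\ref{lem:mult} (to factor the summand), Lemma~\ref{lem:key} (for Weil-type bounds after completion), and a H\"older-type sixth-moment argument to attain the stated exponents. By Lemma~\ref{lem:mult}, the summand becomes
\[
\chi(\psi_{\ell_1 n}(P))\overline{\chi(\psi_{\ell_2 n}(P))} = \chi(\Psi(nP))\cdot\chi(\psi_n(P))^{\ell_1^2-\ell_2^2},
\]
where $\Psi=\psi_{\ell_1}/\psi_{\ell_2}$. Since $\ell_1\neq\ell_2$ are distinct primes, the non-zero $\ell_1$- and $\ell_2$-torsion loci are disjoint, so $\Psi$ has $\ell_1^2-1$ simple zeros and $\ell_2^2-1$ simple poles; by the same reasoning as in Lemma~\ref{lem:non-power}, $\Psi$ is not a non-trivial power, and $\deg\Psi\leq\ell_1^2+\ell_2^2\ll W^2$ with $W=\max\{\ell_1,\ell_2\}$. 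Hence Lemma~\ref{lem:key} is available for the first factor.

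The principal obstacle is the auxiliary factor $\chi(\psi_n(P))^{\ell_1^2-\ell_2^2}$, since $n\mapsto \psi_n(P)$ is not of the form $\Phi(nP)$ for any fixed rational function $\Phi$. To dispose of it we exploit the EDS identity $\psi_{m+n}(P)\psi_{m-n}(P) = -\psi_m(P)^2\psi_n(P)^2\bigl(x(mP)-x(nP)\bigr)$, which follows from the recursion in the excerpt with $r=1$. This converts quadratic combinations of $\psi$-values at $P$ into rational functions of $mP$ and $nP$, so that products of $\chi(\psi_n(P))$-values arising inside higher moments can be rewritten as $\chi$-values of rational functions on $E$ evaluated at translates of $nP$.

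Concretely, we apply H\"older's inequality with exponent $6$ to $T=\sum_{n\leq N} f(n)$, expanding $|T|^6$ as a sum of sixth-order correlations. Using the EDS identity to rewrite each such product as a $\chi$-value of a rational function of degree $\ll W^{O(1)}$ on $E$, we complete each inner partial sum over $n$ to a full period of length $R$ (cost $O(\log R)$) and apply Lemma~\ref{lem:key}. The resulting off-diagonal contribution is bounded by $\ll W^6 p^{1/2} N^5$ up to logarithms, while the diagonal contribution, comprising tuples for which the algebraic identity degenerates, is controlled by the divisor estimate~\eqref{eq:sumtau} and contributes $\ll N^3(\log N)^{O(1)}$. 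Taking sixth roots and using $N\leq R$ yields the claimed bound, with the factor $(\log R)(\log\log R)^{1/3}$ emerging from completion together with a refined divisor bound in the diagonal analysis.

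The main technical difficulty is ensuring that the rational functions arising from the EDS identity in the sixth-moment expansion satisfy the non-power condition of Lemma~\ref{lem:key}. This is verified via Lemma~\ref{lem:non-power} together with the distinct primality of $\ell_1,\ell_2$, which rules out coincidences in the zero/pole structure of the intermediate functions.
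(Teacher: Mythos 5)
Your opening moves are correct: by Lemma~\ref{lem:mult},
\[
\chi(\psi_{\ell_1 n}(P))\overline{\chi(\psi_{\ell_2 n}(P))}
= \chi\!\left(\tfrac{\psi_{\ell_1}}{\psi_{\ell_2}}(nP)\right)\chi(\psi_n(P))^{\ell_1^2-\ell_2^2},
\]
and your observation that $\psi_{\ell_1}/\psi_{\ell_2}$ is not a non-trivial power (disjoint torsion loci) with $\deg\ll W^2$ is fine. The gap lies entirely in how you dispose of the factor $\chi(\psi_n(P))^{\ell_1^2-\ell_2^2}$. First, this factor is \emph{not} of the form $\chi(\Phi(nP))$ for any rational function $\Phi$ on $E$: as a function of $n$ it has period $R = d\cdot\ord P$, not $\ord P$, so it cannot be folded into a trace-function-type inner sum, and Lemma~\ref{lem:key} gives no traction on it. The EDS identity $\psi_{m+n}\psi_{m-n}=\psi_m^2\psi_n^2(x(nP)-x(mP))$ relates $\psi$-values at several indices; it does not express $\psi_n(P)$ itself (nor $\chi(\psi_n(P))$) as a rational function of $nP$, so the proposed rewriting of sixth-order products into $\chi$-values of rational functions at $n_jP$ does not go through. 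Second, and more structurally: after expanding $|T|^6$ you have a six-fold sum $\sum_{n_1,\dots,n_6\le N}$ with no surviving inner one-variable complete sum; Lemma~\ref{lem:key} is a bound for $\sum_{n\le R}\chi(\Psi(nP))\er(an)$, and there is nothing in the Hölder expansion that produces such an object. Fixing five variables and summing over the sixth would require the summand to be a $\chi$-value of a rational function of $n_6P$ (it is not, by the period obstruction above), and the ``diagonal'' would then be a genuinely multi-variable degeneracy locus whose count of order $N^3$ you assert without argument.

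The paper's actual mechanism is quite different and is worth internalizing. Instead of Hölder, one averages the completed exponential sum over an auxiliary multiplier $r$ ranging over a set $\cR_d$ of $r\le L$ with $r\equiv 1\pmod d$ and $\gcd(r,\ell_1\ell_2 R)=1$: since such $r$ are units mod $R$, replacing $n$ by $rn$ leaves the completed sum unchanged, so the sum equals $(\sharp\cR_d)^{-1}\sum_{r\in\cR_d}\sum_{n\le R}\chi(\psi_{\ell_1 rn}(P))\overline{\chi(\psi_{\ell_2 rn}(P))}\er(arn)$. One then applies Cauchy--Schwarz in $n$ to bring the $r$-sum inside, and here the congruence $r\equiv 1\pmod d$ does the essential work: by Lemma~\ref{lem:mult} the nuisance factor becomes $\chi(\psi_n(P))^{r^2(\ell_1^2-\ell_2^2)}=\chi(\psi_n(P))^{\ell_1^2-\ell_2^2}$, which is \emph{independent of $r$}, has modulus at most one, and so drops out of the squared inner sum. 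What remains is $\sum_{r_1,r_2\in\cR_d}\sum_{n\le R}\chi(\Psi_{r_1,r_2}(nP))\er(a(r_1-r_2)n)$ with $\Psi_{r_1,r_2}=\psi_{\ell_1 r_1}\psi_{\ell_2 r_2}\psi_{\ell_2 r_1}^{-1}\psi_{\ell_1 r_2}^{-1}$ --- a genuine one-variable complete sum of a rational function of $nP$, to which Lemma~\ref{lem:non-power} (using the $\gcd$ identity \eqref{eq:gcdcond}, which holds because $r_i$ are coprime to $\ell_1\ell_2$) and then Lemma~\ref{lem:key} apply. Optimizing $L\approx R^{1/3}p^{-1/6}(\log\log R)^{1/3}$ and uncompleting gives the stated bound. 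You should replace the Hölder step by this averaging-plus-Cauchy--Schwarz device; without it there is no way to reach Lemma~\ref{lem:key}, and no way to neutralize the $\chi(\psi_n(P))^{\ell_1^2-\ell_2^2}$ factor.
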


\begin{proof}
Let $a$ be an integer. Denote
$$S_{\ell_1,\ell_2}(a)=\sum_{n \leq R} \chi(\psi_{\ell_1n}(P))\overline{\chi(\psi_{\ell_2n}(P))}\er\left(an\right).$$

For a parameter $L$ to be determined later, we consider the set of integers
$$\mathcal{R}_{d}=\left\{1\leq r \leq L:~r=1 \pmod d,\ \gcd(r,\ell_1\ell_2R)=1\right\}.$$ 

Clearly by the same argument as in the proof of \cite[Lemma 2.4]{BhShp25}, we have
$$
\sharp\,\cR_{d}  \ge  \frac{\varphi(\ell_1\ell_2 R)(L-1)}{d\ell_1\ell_2 R}-\tau(\ell_1\ell_2 R). 
$$ 

Since $\tau(\ell_1\ell_2R)\ll \tau(R)$ and $\ell_1,\ell_2\leq R$, by the standard estimates of the functions $\tau(n)$ and $\varphi(n)$ (see~\cite[Theorems~317 and~328]{HaWr}, for instance), we have 
\begin{equation}\label{eq:rdsize}
\sharp\,\cR_{d}  \gg \frac{L}{\log \log R},
\end{equation}
provided that 
 \begin{equation}\label{eq:Lchoose}
L \ge \exp\(0.7 \log p/\log \log p\). 
\end{equation}

Let us now set 
$$W =  \sum_{r\in \cR_{d}} \sum_{n\leq R} \chi(\psi_{ \ell_1 r n}(P))\overline{\chi(\psi_{\ell_2 r n}(P))} \er(ar n).$$
Since each $r\in \cR_{d}$ is co-prime to $R$, it is clear that
\begin{equation}\label{eqn:toW}
    S_{\ell_1,\ell_2}(a)=\frac{1}{\sharp\,\cR_d} W.
\end{equation}
Applying Cauchy-Schwarz combined with Lemma~\ref{lem:mult}, we get
\begin{equation}\label{eqn:ninside}
\begin{split}
    |W|^2 &\leq R \sum_{n\leq R} \left|\sum_{r\in \cR_{d}}\chi(\psi_{\ell_1 r  n}(P))\overline{\chi(\psi_{\ell_2 r  n}(P))} \er(ar n)\right|^2\\
     &= R \sum_{n\leq R} \left|\sum_{r\in \cR_{d}}\chi(\psi_{\ell_1 r }(nP))\overline{\chi(\psi_{\ell_2 r }(nP))}\chi(\psi_n(P))^{\ell_1^2-\ell_2^2} \er(ar n)\right|^2\\
    &\qquad\quad\quad= R \sum_{n\leq R} \left|\sum_{r\in \cR_{d}}\chi(\psi_{\ell_1 r }(nP))\overline{\chi(\psi_{\ell_2 r }(nP))} \er(ar n)\right|^2\\
    &\qquad\quad\quad\le R \sum_{r_1, r_2 \in \cR_{d}}\left|\sum_{n \leq R} \chi(\varPsi_{r_1,r_2}(nP)) \er(a(r_1 - r_2)n)\right|,
    \end{split}
\end{equation}
where $\varPsi_{r_1,r_2}\in \F_p(E)$ is given by the following
$$\varPsi_{r_1,r_2}(Q)= \psi_{\ell_1 r_1}(Q)\cdot \psi_{\ell_2 r_2}(Q)\cdot \psi_{\ell_2 r_1}^{-1}(Q)\cdot \psi_{\ell_1 r_2}^{-1}(Q).$$
Clearly, we have
\[
\deg \varPsi_{r_1,r_2} \ll  \(L\cdot \max\{\ell_1,\ell_2\}\)^2.
\]

Now for any $r_1\neq r_2\in  \cR_{d}$, we have
\begin{equation}\label{eq:gcdcond}
\gcd\(\ell_1 r_1,\ell_2 r_2\)=\gcd\(r_1,r_2\)=\gcd\(\ell_1r_2,\ell_2r_1\).
\end{equation}
Therefore, Lemma~\ref{lem:non-power} shows that $\varPsi_{r_1,r_2}$ is not a non-trivial power of any $G\in \overline{\F_p(E)}$. Applying Lemma~\ref{lem:key} to the inner sum in~\eqref{eqn:ninside} for each non-diagonal terms $r_1\neq r_2$, and trivially estimating the contributions from the diagonal terms $r_1=r_2$, we obtain
\begin{align*}
|W|^2& \leq R^2 \sharp\, \cR_d+R \sum_{r_1 \neq r_2 \in \cR_d}\left|\sum_{n\leq R} \chi(\Psi_{r_1,r_2}(nP)) \er(a(r_1 - r_2)n)\right|\\
&\ll R^2 \sharp\, \cR_d+R(\sharp\, \cR_d)^2 \cdot \deg \varPsi_{r_1,r_2}\cdot \sqrt{p}\\
&\ll R^2 \sharp\, \cR_{d}+R(\sharp\, \cR_d)^2 \(L\cdot \max\{\ell_1,\ell_2\}\)^2\cdot \sqrt{p}.
\end{align*}
In particular, \eqref{eq:rdsize} and \eqref{eqn:toW} imply that
\begin{equation*}
S_{\ell_1,\ell_2}(a)\ll R L^{-1/2}(\log \log R)^{1/2}+R^{1/2}L\max\{\ell_1,\ell_2\}p^{1/4}.
\end{equation*}

The proof concludes, choosing $L=R^{1/3}p^{-1/6}(\log \log R)^{1/3}$ (clearly $L$ satisfies \eqref{eq:Lchoose} because of the condition \eqref{eqn:Rcondition}), and by the completing technique as in~\cite[Section~12.2]{IwKow}.
\end{proof}

\begin{rem}\label{eq:quadchars}
Following the proof of Lemma~\ref{lem:corel}, it is easy to see that we do not need to assume that $\ell_1,\ell_2\leq R$. Moreover, if both $\ell_1,\ell_2$ are odd, and $\chi$ is quadratic, then just by Lemma~\ref{lem:mult} and Lemma~\ref{lem:key}, we immediately have 
    $$\sum_{n \leq N} \chi(\psi_{\ell_1n}(P))\overline{\chi(\psi_{\ell_2n}(P))}\ll \(\max\{\ell_1,\ell_2\}\)^2p^{1/2}(\log R).$$
    
Due to \eqref{eqn:Rcondition}, of course, the bound above is better than Lemma~\ref{lem:key}, as long as $\ell_1,\ell_2$ are fixed.
\end{rem}

\begin{rem}
If we take $m_1,m_2\leq R$ to be two distinct positive integers. Then, by the same argument in the proof of Lemma~\ref{lem:key}, we have the following weaker estimate
\begin{equation}\label{eq:chim1nm2n}
\sum_{n \leq N} \chi(\psi_{m_1n}(P))\overline{\chi(\psi_{m_2n}(P))}\ll \max\{m_1,m_2\}p^{\frac{1}{12}}R^{5/6}(\log R)^{4/3}.
\end{equation}

The reason for the weaker bound is that the identity \eqref{eq:gcdcond} does not necessarily hold, unless $r_1,r_2$ are primes. In that case, $\cR_d$ is essentially a set of primes, and we then have to argue as in the proof of \cite[Theorem 5.1]{ShSt}, where the argument does not allow us to replace the factor $(\log R)^{1/3}$ by $(\log\log R)^{1/3}$. 
\end{rem}

Following the argument in the remark above, we also have the following estimate for any $m$;
\begin{equation}\label{eq:withonlym}
\sum_{n \leq N} \chi(\psi_{m n}(P))\ll m p^{\frac{1}{12}}R^{5/6}(\log R)^{4/3}.
\end{equation}

In fact, one can get here $(\log R)(\log \log R)^{1/3}$ instead of $(\log R)^{4/3}$. This is because, $\psi_{mr_1}\cdot \psi_{mr_2}^{-1}$ is not power of any function in $\overline{\F_p}(E)$, for any integers $r_1\neq r_2$.

However, the dependence of $m$ in \eqref{eq:withonlym} would be too restrictive for us to prove Theorem~\ref{thm:sqfree}. To overcome that obstacle, we use Lemma~\ref{lem:mult} to get a sum associated with the point $mP\in E(\F_p)$ (see Lemma~\ref{lem:chimn} below). To handle the extra twisting factors $\chi(\psi_m(P))^{n^2}$ arising from Lemma~\ref{lem:mult}, we first need the following estimate, which essentially generalizes both \cite[Lemma 2.4]{BhShp25} and \cite[Theorem 5.1]{ShSt}.

\begin{lem}\label{lem:sumoverkmodd}
    Let $q$ and $1\leq k\leq q$ be any two fixed integers, and $R$ be as in \eqref{eq:Rweaker}. Then for any integer $N\leq R$, we have
    $$ \sum_{\substack{n\leq N\\ n\equiv k \pmod q}} \chi (\psi_n(P)) \ll p^{1/12}R^{5/6}(\log R)(\log \log R)^{1/3}.$$
\end{lem}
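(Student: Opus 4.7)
The plan is to follow the template of the proof of Lemma~\ref{lem:corel}, handling the arithmetic‑progression condition $n\equiv k\pmod q$ by orthogonality of additive characters mod $q$ and then reducing to a bound on a full‑period character sum with additive twist. Precisely, write
$$\sum_{\substack{n\le N\\ n\equiv k\pmod q}} \chi(\psi_n(P)) = \frac{1}{q}\sum_{j=0}^{q-1} \eq(-jk)\, S_j(N),\qquad S_j(N)=\sum_{n\le N}\chi(\psi_n(P))\eq(jn),$$
so that (since $q$ is fixed) it suffices to prove the claimed bound for each $S_j(N)$. The function $\chi(\psi_n(P))\eq(jn)$ has period dividing $\mathrm{lcm}(q,R)$; applying the completing technique as in \cite[Section~12.2]{IwKow}, then splitting the completed sum into blocks of length $R$ and exploiting the $R$‑periodicity of $\chi(\psi_n(P))$, forces the surviving frequencies to descend to characters modulo $R$, and yields
$$|S_j(N)|\ll \log R\cdot \max_{c}|M(c)|,\qquad M(c)=\sum_{m\le R}\chi(\psi_m(P))\er(cm).$$

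To bound $M(c)$ I replicate the averaging argument from the proof of Lemma~\ref{lem:corel}. With $L$ a parameter to be chosen, introduce
$$\cR_d=\{1\le r\le L:\ r\equiv 1\pmod d,\ \gcd(r,R)=1\},$$
so that $\sharp\,\cR_d\gg L/\log\log R$ whenever $L$ satisfies \eqref{eq:Lchoose}. For each $r\in\cR_d$ the bijection $m\mapsto rm\pmod R$ rewrites $M(c)=\sum_{m\le R}\chi(\psi_{rm}(P))\er(crm)$; averaging over $r$ and applying Lemma~\ref{lem:mult}, together with $r^2\equiv 1\pmod d$, yields
$$\sharp\,\cR_d\cdot M(c)=\sum_{m\le R}\chi(\psi_m(P))\sum_{r\in\cR_d}\chi(\psi_r(mP))\er(crm).$$
Cauchy--Schwarz over $m$ and expansion of the square then reduce the problem to applying Lemma~\ref{lem:key} to the rational function $\Phi_{r_1,r_2}(Q)=\psi_{r_1}(Q)\psi_{r_2}^{-1}(Q)$ for each off‑diagonal pair, with the relevant additive frequency $c(r_1-r_2)$. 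Since $\deg\Phi_{r_1,r_2}\ll L^2$, the diagonal contributes $\sharp\,\cR_d\cdot R$ and the off‑diagonal contributes $(\sharp\,\cR_d)^2L^2\sqrt{p}$, giving
$$|M(c)|^2\ll \frac{R^2\log\log R}{L}+RL^2\sqrt{p}.$$
Optimising $L=R^{1/3}(\log\log R)^{1/3}p^{-1/6}$ balances both terms and yields $M(c)\ll p^{1/12}R^{5/6}(\log\log R)^{1/3}$; the constraint \eqref{eq:Lchoose} on $L$ then translates precisely to the hypothesis \eqref{eq:Rweaker} on $R$, and combining with the completing step and the orthogonality sum over $j$ gives the claim.

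The main technical point where the analysis diverges from Lemma~\ref{lem:corel} is justifying that $\Phi_{r_1,r_2}=\psi_{r_1}/\psi_{r_2}$ is not a non‑trivial power of any function in $\overline{\F_p(E)}$ when $r_1\ne r_2$; this is not a direct instance of Lemma~\ref{lem:non-power}. However, it follows from the simple‑zero structure of division polynomials: $\psi_h$ has $h^2-1$ simple zeros at the non‑trivial $h$‑torsion points and a pole of order $h^2-1$ at $O$, so $\Phi_{r_1,r_2}$ has simple zeros at the $r_1$‑torsion points outside the $r_2$‑torsion, simple poles at the $r_2$‑torsion points outside the $r_1$‑torsion, and a pole or zero of order $|r_1^2-r_2^2|$ at $O$. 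Presence of simple zeros or poles forces the non‑power conclusion, so Lemma~\ref{lem:key} applies and the outline above goes through.
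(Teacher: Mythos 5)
Your proposal is correct, and the main engine is the same as the paper's: average over a set $\cR$ of multipliers $r\equiv 1\pmod d$ coprime to $R$, reindex $m\mapsto rm$, kill the $\chi(\psi_m(P))^{r^2}$ factor via Lemma~\ref{lem:mult}, apply Cauchy--Schwarz and then Lemma~\ref{lem:key} to $\psi_{r_1}\psi_{r_2}^{-1}$, and optimise $L$. Your observation that $\psi_{r_1}/\psi_{r_2}$ is not a direct instance of Lemma~\ref{lem:non-power} but still fails to be a nontrivial power because it retains simple zeros or poles (since $r_1^2-\gcd(r_1,r_2)^2$ and $r_2^2-\gcd(r_1,r_2)^2$ cannot both vanish when $r_1\neq r_2$) is exactly the point that is implicit in the paper's appeal to \cite[Lemma~2.4]{BhShp25}.

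Where you diverge is in the handling of the congruence $n\equiv k\pmod q$. The paper bakes it into the averaging set: it works with $\cR_{d,q}=\{r\equiv 1\pmod{dq}\}$, so the substitution $n\mapsto rn$ preserves the progression $n\equiv k\pmod q$ and the entire argument runs inside the progression, requiring nothing beyond the single completion modulo $R$. You instead strip the congruence first by orthogonality modulo $q$, reducing to $S_j(N)=\sum_{n\le N}\chi(\psi_n(P))\eq(jn)$, then complete to a sum over a full period. This is where your sketch is a bit loose: $\chi(\psi_n(P))\eq(jn)$ has period $\lcm(q,R)$, which need not equal $R$, and the claim that ``the surviving frequencies descend to characters modulo $R$'' needs a line of verification. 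It does go through: with $T=\lcm(q,R)$ and $T=qQ'=RR'$, the block decomposition gives $\sum_{n\le T}\chi(\psi_n(P))\eq(jn)\e_T(cn)=R'\,M\!\bigl((jQ'+c)/R'\bigr)$ when $R'\mid(jQ'+c)$ and $0$ otherwise, and tracking the Fej\'er weights over the admissible $c$ yields $|S_j(N)|\ll\log R\cdot\max_{c}|M(c)|$ since $q$ is fixed. (Alternatively, one can skip the orthogonality and $\lcm$ bookkeeping altogether by expanding the indicator of $\{n\le N:n\equiv k\pmod q\}$ in additive characters modulo $R$ directly, which immediately gives the same reduction to $\max_c|M(c)|$.) So your route is modular --- reduce the progression sum to the known bound on the complete twisted sum $M(c)$ and cite that --- whereas the paper's use of $\cR_{d,q}$ re-derives the bound with the congruence built in and sidesteps the $\lcm$ subtlety; both are sound and give the same exponents and log factors, and your check that the optimal $L$ meets the threshold \eqref{eq:Lchoose} under hypothesis \eqref{eq:Rweaker} matches the paper's constraint.
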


\begin{proof}
For a suitable parameter $L$, consider the set of integers
$$\mathcal{R}_{d,q}=\{r<L:r \equiv 1\pmod {dq}\},$$
where again, $d$ is the order of $\chi$. 

Since $q,d$ are both fixed, by the same argument as in the proof of \cite[Lemma 2.4]{BhShp25}, we have
$$
\sharp\,\cR_{d,q} \gg \frac{L}{\log \log R},\quad \mathrm{provided~that}\quad L \ge \exp\(0.7 \log p/\log \log p\).
$$ 
Then, for any $r\in \mathcal{R}_{d,q}$
$$\sum_{\substack{n\leq R\\n\equiv k \pmod q}} \chi(\psi_{n}(P))e_R(an)=\sum_{\substack{n\leq R\\n\equiv k \pmod q}} \chi(\psi_{nr}(P))e_R(anr).$$
In particular, the desired sum is bounded by $\frac{1}{\sharp\, \mathcal{R}_{d,q}}\cdot |W_k|$, where
$$W_k = \sum_{r \in \mathcal{R}_{d,q}}~\sum_{\substack{n\leq R\\n\equiv k \pmod q}} \chi(\psi_{nr}(P))e_R(anr).$$
By Cuachy-Schwarz and Lemma~\ref{lem:mult}, we have
\begin{align*}
|W_k|^2 &\le R \sum_{n\leq R}\left|\sum_{r \in \mathcal{R}_{d,q}}  \chi(\psi_{r n}(P))e_R(ar n)\right|^2\\
&=R\sum_{n\leq R}\left|\sum_{r \in \mathcal{R}_{d,q}} \chi(\psi_{r}(nP))\chi(\psi_n(P))e_R(a r n)\right|^2\\
&= R\sum_{n\leq R}\left|\sum_{r \in \mathcal{R}_{d,q}}  \chi(\psi_{r}(nP))e_R(a r n)\right|^2\\
&\le R \sum_{r_1,r_2 \in \mathcal{R}_{d,q}}\left|\sum_{n\leq R} \chi(\psi_{r_1}\cdot \psi_{r_2}^{-1}(nP))e_R(a(r_1-r_2) n)\right|.
\end{align*}

From this point, we argue similarly as exactly in the proof of \cite[Lemma 2.4]{BhShp25}, and conclude the proof.
    \end{proof}

Now, we are ready to weaken the dependence of $m$ in \eqref{eq:withonlym}.

\begin{lem}\label{lem:chimn}
Let $R$ and $\varepsilon$ be as in Theorem~\ref{thm:divbdd}, and $m\leq R$ be any positive integer satisfying $(m,R)\leq p^{\varepsilon/2}$. Then for any integer $1\leq N\leq R$, we have
    $$\sum_{n\leq N} \chi(\psi_{mn}(P))\ll p^{1/12}R^{5/6}(\log R)(\log \log R)^{1/3}.$$
\end{lem}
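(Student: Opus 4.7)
The key idea is to use the near-multiplicativity identity from Lemma~\ref{lem:mult},
$$\chi(\psi_{mn}(P)) = \chi(\psi_n(mP)) \, \chi(\psi_m(P))^{n^2},$$
which re-expresses the sum in a form attached to the point $mP$, modulated by a constant $d$-th root of unity raised to $n^2$. This puts us in the setting to which Lemma~\ref{lem:sumoverkmodd} applies, once the nuisance factor $\chi(\psi_m(P))^{n^2}$ is peeled off.

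If $\chi(\psi_m(P)) = 0$ the sum vanishes trivially; otherwise, set $\zeta := \chi(\psi_m(P))$, a $d$-th root of unity. Since $n \equiv k \pmod d$ implies $n^2 \equiv k^2 \pmod d$, the factor $\zeta^{n^2}$ is constant on each residue class modulo $d$, and splitting accordingly yields
$$\sum_{n \leq N} \chi(\psi_{mn}(P)) = \sum_{k=1}^{d} \zeta^{k^2} \sum_{\substack{n \leq N \\ n \equiv k \pmod d}} \chi(\psi_n(mP)).$$
Because $d$ is fixed, it suffices to bound each of the $d$ inner sums uniformly by the right-hand side of the lemma.

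I would then apply Lemma~\ref{lem:sumoverkmodd} to each inner sum with $q = d$, but with $P$ replaced by $mP$. The analogue of $R$ for the point $mP$ is $R_{mP} := d\,\ord(mP) = R/\gcd(m, \ord P)$, and the hypothesis $(m, R) \leq p^{\varepsilon/2}$ yields $\gcd(m, \ord P) \leq p^{\varepsilon/2}$, hence $R_{mP} \geq p^{1/2 + \varepsilon/2}$, which comfortably satisfies \eqref{eq:Rweaker} for $p$ sufficiently large. In the principal range $N \leq R_{mP}$, Lemma~\ref{lem:sumoverkmodd} delivers at once
$$\sum_{\substack{n \leq N \\ n \equiv k \pmod d}} \chi(\psi_n(mP)) \ll p^{1/12} R_{mP}^{5/6}(\log R_{mP})(\log\log R_{mP})^{1/3} \leq p^{1/12} R^{5/6}(\log R)(\log\log R)^{1/3}.$$

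The main obstacle is the residual range $R_{mP} < N \leq R$, where the summation spans more than one full period of the $R_{mP}$-periodic sequence $\chi(\psi_n(mP))$. I would resolve this by $R_{mP}$-periodicity: write $N = q R_{mP} + r$ with $q \leq \gcd(m, \ord P) \leq p^{\varepsilon/2}$ and $0 \leq r < R_{mP}$, and apply Lemma~\ref{lem:sumoverkmodd} to each of the $q$ full-period blocks and to the terminal partial sum of length $r$. The identity $q \cdot R_{mP}^{5/6} \leq \gcd(m, \ord P)^{1/6} \cdot R^{5/6}$ makes clear why the sharper assumption $(m, R) \leq p^{\varepsilon/2}$ (in place of the weaker $p^{\varepsilon}$) is imposed: it confines the aggregate loss from wrap-around to within the stated estimate.
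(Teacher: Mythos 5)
Your approach is identical to the paper's: apply Lemma~\ref{lem:mult} to pass from $\chi(\psi_{mn}(P))$ to $\chi(\psi_n(mP))\chi(\psi_m(P))^{n^2}$, observe the twist factor is constant on residue classes mod $d$, split accordingly, verify $\ord(mP)\ge p^{1/2+\varepsilon/2}$ from $(m,R)\le p^{\varepsilon/2}$, and invoke Lemma~\ref{lem:sumoverkmodd} with $q=d$ for the point $mP$. Up to that point you have reproduced the paper's argument exactly, and that part is correct.

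Where you go further is in noticing that Lemma~\ref{lem:sumoverkmodd}, transplanted to the point $mP$, carries the constraint $N\le R_{mP}=d\ord(mP)=R/(m,\ord P)$, whereas the lemma being proved allows $N\le R$. The paper's proof does not address this, so your instinct to worry about the range $R_{mP}<N\le R$ is well founded. Unfortunately, the resolution you offer does not close the gap. Your wrap-around estimate yields a bound of order
\[
(q+1)\, p^{1/12} R_{mP}^{5/6}(\log R_{mP})(\log\log R_{mP})^{1/3}\ll \gcd(m,\ord P)^{1/6}\, p^{1/12} R^{5/6}(\log R)(\log\log R)^{1/3},
\]
and $\gcd(m,\ord P)^{1/6}$ can be as large as $p^{\varepsilon/12}$; since $\varepsilon$ is fixed, this is a genuine power of $p$ that cannot be absorbed into the implied constant. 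Your sentence claiming the hypothesis $(m,R)\le p^{\varepsilon/2}$ ``confines the aggregate loss from wrap-around to within the stated estimate'' is therefore not correct: the hypothesis controls $\ord(mP)$ so that \eqref{eq:Rweaker} holds for $mP$, but it does not make the wrap-around factor $O(1)$. In fact this reveals that the lemma, as literally stated with the range $1\le N\le R$, is imprecise; what is actually proved (and what is used in the proof of Theorem~\ref{thm:sqfree}, where the inner variable runs up to $N/d^2\le R/m\le R_{mP}$) is the bound in the range $N\le R_{mP}$, where no wrap-around is needed.
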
 

\begin{proof}
By Lemma~\ref{lem:mult}, we can rewrite the sum as  
\[
\left|\sum_{n\leq N} \chi(\psi_n(mP))\chi(\psi_m(P))^{n^2}\right| \ll \sum_{k \leq d} \left| \sum_{\substack{n \leq N \\ n \equiv k \pmod{d}}} \chi(\psi_n(mP)) \right|.
\]

Now, applying Lemma~\ref{lem:sumoverkmodd} to each of the inner sums with $q = d$, and noting that
\[
\ord(mP) = \frac{\ord(P)}{(m, \ord(P))} \geq p^{1/2 + \varepsilon/2},
\]
the proof follows.
\end{proof}
\section{Proof of Theorem~\ref{thm:divbdd}}
Let us recall the sum of our interest
\begin{equation*}
    S_{f,\chi,P}(N) = \sum_{n\leq N} f(n)\chi(\psi_n(P)).
\end{equation*}

To estimate the sum, we closely follow the approach outlined in \cite{KorShp}. Let $2 \leq x < y \leq N$ be parameters to be chosen appropriately, and consider the interval $I = (x, y]$. We then split the sum according to whether the indices have no prime factors in $I$, or have at least one prime factor in $I$, and estimate each contribution separately.

To make the splitting more systematic, denote $\cA_r(N, I)$ the set of integers $1\leq n \leq N$ with exactly $r$ prime factors (counted with multiplicities) in $I$. Note that $\cA_r(N,I)=\emptyset$, for any $r>\log N$. Therefore, we can write
\begin{equation}\label{eqn:sumoverr}
S_{f, \chi,P}(N) = \sum_{r\leq \log N} U_{f,r}(N,I),
\end{equation}
where we denote
$$U_{f,r}(N,I)=\sum_{n\in \cA_r(N, I)} f(n)\chi(\psi_n(P)).$$

For the rest of this section, we abuse the notation by writing $U_r$ in place of $U_{f,r}(N,I)$.

\subsection{Treatment for $U_0$}
The fundamental lemma of combinatorial sieve \cite[Theorem 4.4]{Tenenbaum} shows that the set $\cA_0(N,I)$ is small when $x$ and $y$ are widely separated parameters. However, we need an estimate for the sum of $\tau_{\nu}$-bounded functions over this small set. We simply use the following estimate, readily available from \cite[(8-2) and (8-3)]{KorShp}.
\begin{equation}\label{eq:u0}
U_{0}\ll N \exp\left(-\frac{\log N}{4\log x}\right) (\log N)^{\frac{\nu^2-1}{2}}+\frac{N(\log N)^{\nu-1}}{(\log y)^{\nu}}(\log x)^{\nu}.
\end{equation}
A similar analysis could also be found in \cite[Lemma 2.2]{Sun}.

\subsection{Treatment for $U_r$ for $r>0$}

We can split the sum $U_r$ as 
$$U_r = U_{r,1} + U_{r,2},$$
where $U_{r,1}$ denotes the contribution from the integers $n \in \cA_r(N,I)$ whose prime divisors from the interval $I$ all appear with multiplicity one. The remaining contribution, coming from integers where at least one prime divisor from $I$ appears with multiplicity at least two, is denoted by $U_{r,2}$. 

Note that the sums $U_{r,2}$ are much easier to handle, as crudely we have
\begin{equation}\label{eqn:u2tau}
\begin{split}
\left|\sum_{r\leq \log N} U_{r,2}\right| &\leq \sum_{\substack{x< \ell\leq y\\ \ell~\mathrm{prime}}}~\sum_{1\leq s \leq \frac{N}{\ell^{2}}} \left|f(\ell^2 s)\right|\leq  \sum_{\substack{x< \ell\leq y\\ \ell~\mathrm{prime}}} 
\tau_{\nu}(\ell^{2}) 
\sum_{1\leq s \leq \frac{N}{\ell^{2}}}
\tau_{\nu}(s)\\
&\leq N (\log N)^{\nu-1}\sum_{\substack{x< \ell\\ \ell~\mathrm{prime}}} 
\frac{\tau_{\nu}(\ell^{2})}{\ell^2}\ll \frac{N(\log N)^{\nu-1}}{x \log x}.
\end{split}
\end{equation}
\subsubsection{Estimating {$U_{r,1}$}}
Denote $\cA'_r(N,I)$ be the set of integers that appear as an index in the sum $U_{r,1}$. Note that any $n\in \cA'_r(N,I)$ has exactly $r$ representations
of the form $n = \ell m$, where the prime $\ell \in I$, and integer $m\in \cA'_{r-1}(N/\ell,I)$, with $\gcd(m,\ell)=1$. Due to the multiplicativity of $f$, we can write
$$
U_{r,1}=r^{-1}\sum_{\ell \in I}~f(\ell)~\sum_{\substack{m\in \cA'_{r-1}(N/\ell,I)\\\g(m, \ell) = 1}} f(m)\chi(\psi_{\ell m}(P)).$$

Now, dividing $I=(x,y]$ into $K=O(\log y)$ many dyadic intervals $I_k=(x_k,y_k]$ for $k=0,1,\cdots, K$, as in \cite[(7-7)]{KorShp}, we can write 
\begin{equation}\label{eq:usplit}
U_{r,1}=\sum_{k=0}^{K} V_{k,r},
\end{equation}
where 
\begin{align*}
V_{k,r}&=r^{-1}\sum_{\ell\in I_k}~f(\ell)~\sum_{\substack{m\in \cA'_{r-1}(N/\ell, I)\\\g(m, \ell) = 1}} f(m)\chi(\psi_{\ell m}(P))\\
&=r^{-1}\sum_{\substack{m\in \cA_{r-1}(N/x_k, I)}}~f(m)~\sum_{\substack{\ell \in I_k\cap (1,N/m] \\\g(m, \ell) = 1}} f(\ell) \chi(\psi_{\ell m}(P)).
\end{align*}

Note that $|f(m)|\le \tau_{\nu}(m)$, and for each $m\in  \cA'_{r-1}(N/x_k,I)$ there exists at most $r$ many $\ell \in I_k$ for which $\gcd(m,\ell)\neq 1$. Therefore, we can write
\begin{equation}\label{eq:vbound}
V_{k,r}\ll r^{-1}W_{k,r}+N(\log N)^{\nu-1}x_k^{-1},
\end{equation}
where
\begin{equation*}
W_{k,r}=\sum_{m\in \cA_{r-1}\left(N/x_k,I\right)}\tau_{\nu}(m)\left|\sum_{\ell \in I_k\cap (1,N/m]}f(\ell)\chi(\psi_{\ell m}(P))\right|.
\end{equation*}

Crudely, summing over $m\leq N/x_k$ and applying Cauchy–Schwarz, we have
\begin{align*}
&|W_{k,r}|^2\leq Nx_k^{-1}(\log N)^{\nu^2-1} \sum_{m \leq N/x_k} 
    \left| \sum_{\ell \in I_k \cap (1,N/m]} 
    f(\ell)\, \chi(\psi_{\ell m}(P)) \right|^2 \\
&\leq Nx_k^{-1}(\log N)^{\nu^2-1} \sum_{\ell_1,\,\ell_2 \in I_k} 
    \left| \sum_{m \leq N/{\max\{x_k,\, \ell_1,\, \ell_2\}}} 
    \chi(\psi_{\ell_1 m}(P))\, \overline{\chi(\psi_{\ell_2 m}(P))} \right|, 
    \end{align*}
where we are using above that $|f(\ell_1)f(\ell_2)|= O(1)$, and the estimate from \eqref{eq:sumtau} for $C=2$.

At this point, our argument deviates from the proof of \cite[Theorem 2.1]{KorShp}. Instead of using \cite[Corollary 4.2]{KorShp}, we appeal to Lemma~\ref{lem:key} to estimate the inner sums above for $\ell_1\neq \ell_2$. Moreover, we estimate the contributions from $\ell_1=\ell_2$ trivially, and get
\begin{equation*}
\begin{split}
\left|W_{k,r}\right|^2&\ll \left(N x_k^{-1}\left(y_kNx_k^{-1}+y^3_k p^{\frac{1}{12}}R^{5/6+o(1)}\right)\right)(\log N)^{\nu^2-1}\\
&\ll \left(N^2x_k^{-1}+Nx_k^2 p^{\frac{1}{12}}R^{5/6+o(1)}\right)(\log N)^{\nu^2-1}.
\end{split}
\end{equation*}
Consequently, we deduce the following estimate from \eqref{eq:usplit} and \eqref{eq:vbound}
\begin{equation}\label{eqn:u1tau} 
\begin{split}
U_{r,1}&\ll r^{-1}\sum_{k=0}^{K}\(W_{k,r}+rN(\log N)^{\nu-1}x_k^{-1}\)\\
&\ll r^{-1} \left( Nx^{-1/2} + y N^{1/2} p^{\frac{1}{24}}R^{5/12+o(1)}\right)(\log N)^{\frac{\nu^2-1}{2}}\\
&\hspace{5cm}+N(\log N)^{\nu-1}x^{-1}.
\end{split}
\end{equation}

\subsection{Concluding the proof}\label{sec:concludingThm1}
From \eqref{eqn:sumoverr}, we have
\begin{align*}
S_{f, \chi,P}(N)&\ll \sum_{\substack{r\leq \log N \\r \neq 0}} |U_{r,1}|+\left|\sum_{\substack{r\leq \log N \\r\neq 0}} U_{r,2}\right|+|U_{0}|.
\end{align*}
Let us choose the parameters
$$x=(\log R)^{\nu^2+3},\quad \mathrm{and} \quad y=R^{\varepsilon/4}.$$
From \eqref{eqn:u1tau}, we have 
\begin{align*}
\sum_{\substack{r\leq \log N \\r \neq 0}}|U_{r,1}|&\ll \frac{N} {\log R}+y N^{1/2} p^{\frac{1}{24}}R^{5/12+o(1)}\ll \frac{N}{\log R},
\end{align*}
where the last inequality is due to the condition \eqref{eqn:Nfor1bdd}. 

From \eqref{eqn:u2tau}, we have
$$\left|\sum_{\substack{r\leq \log N\\r\neq 0}} U_{r,2}\right|\ll \frac{N(\log N)^{\nu-1}}{x \log x}\ll \frac{N}{\log R}.$$
The proof is now complete, as \eqref{eq:u0} implies
$U_0\ll \varepsilon^{-\nu}N\frac{(\log \log R)^{\nu}}{\log R}.$
\qed

\section{A power saving with certain twists}
 \subsection{Proof of Theorem~\ref{thm:dirichlet}}
To prove this, we do not quite follow the techniques of the previous section; rather, we simply apply Lemma~\ref{lem:sumoverkmodd}. Suppose that $\psi$ is a Dirichlet character modulo $q$. Then, we can write
\begin{align*}
S_{\psi,\chi,P}(N)=\sum_{k\leq q}\psi(k)\sum_{\substack{n\leq N\\ n\equiv k \pmod q}} \chi(\psi_{n}(P)).
\end{align*}
The proof follows, immediately applying Lemma~\ref{lem:sumoverkmodd} to each of the $q$ many inner sums.
\qed

\subsection{Proof of Theorem~\ref{thm:sqfree}}
Let us first write
$$S_{\mu^2,\chi,P}(N) = \sum_{d\leq N^{1/2}} \mu(d)\sum_{n\leq N/d^2}\chi(\psi_{d^2n}(P)).$$
Then, we can split the sum as 
\begin{equation}\label{eq:split}
S_{\mu^2,\chi,P}(N)=T_{1}+T_{2},
\end{equation}
where $T_{1}$ is the contribution from the $d$ for which $\gcd(d,R)\leq p^{\varepsilon/4}$, and $T_2$ be the sum over the remaining $n\leq N$. 

First, we split $T_1$ into $\log N$ many type I sums of the form
$$S(D,N)=\sum_{\substack{D\leq d<2D\\ \gcd(d,R)\leq p^{\varepsilon/4}}} \mu(d)\sum_{n\leq N/d^2}\chi(\psi_{d^2n}(P)).$$

As for any such $d$ above we have $\gcd(d^2,R)\leq p^{\varepsilon/2}$, Lemma~\ref{lem:chimn} implies that
\begin{equation}\label{eq:sdn1}
    S(D,N)\ll Dp^{1/12}R^{5/6}(\log R)(\log \log R)^{1/3}.
\end{equation}
Estimating rather trivially, we also have
\begin{equation*}
S(D,N)\ll N/D.
\end{equation*}

In particular, choosing $D_0$ such that 
$$D_0^2p^{1/12}R^{5/6}(\log R)(\log \log R)^{1/3}=N,$$ 
and applying \eqref{eq:sdn1} for $D\le D_0$, and \eqref{eq:sdn1} for $D\ge D_0$, we finally derive
$$T_1\ll N^{1/2}p^{1/24}R^{5/12}(\log R)^{3/2}(\log \log R)^{1/6}.$$

On the other hand, the number of $n\leq N$ that are divisible by at least one divisor of $R$ greater than $p^{\varepsilon/4}$, is at most $Np^{-\varepsilon/4}\tau(R)$. Here, $\tau(R)=\tau_{1}(R)$ denotes the number of divisors of $R$. In particular, the trivial estimation gives 
$$T_2\ll Np^{-\varepsilon/4}\exp\((\log 2+o(1))\log R/\log \log R\),$$
where we use the well-known bound on $\tau(R)$; see~\cite[Theorem 317]{HaWr}. The proof concludes from \eqref{eq:split}.
\qed

\section{Proof of Theorem~\ref{thm:ysmooth}}
\subsection{Preliminaries}
\subsubsection{Smooth numbers in a smaller range} We first need the following estimates.

\begin{lem}\label{lem:psismallN}
    Let $\alpha=\alpha(N,y)$ be as in \eqref{eq: Psi N-alpha}. For any $1\le L\le N$, we have
    \begin{equation}\label{eq:N/L}
        \Psi(N/L,y) \ll \frac{1}{L^{\alpha}} \Psi(N,y),\quad \mathrm{and}
    \end{equation}
    \begin{equation}\label{eq:L}
         \Psi(L, y) \ll L^{\alpha}N^{o(1)}.
    \end{equation}
\end{lem}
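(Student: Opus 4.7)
The plan is to derive both estimates from the asymptotic \eqref{eq:psiNy} and the explicit formula \eqref{eq: Psi N-alpha}, exploiting the slow variation of the saddle-point exponent $\alpha(\cdot, y)$ in its first argument.

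For \eqref{eq:N/L}, the natural first move is Rankin's trick with exponent $\alpha = \alpha(N, y)$: since $Lm \le N$ implies $(N/(Lm))^\alpha \ge 1$, one obtains
\begin{equation*}
\Psi(N/L, y) \le L^{-\alpha} N^\alpha \prod_{p \le y}\(1 - p^{-\alpha}\)^{-1}.
\end{equation*}
Combined with the Hildebrand--Tenenbaum saddle-point asymptotic $\Psi(N, y) = N^\alpha \prod_{p \le y}(1 - p^{-\alpha})^{-1}/(\alpha\sigma\sqrt{2\pi}) \cdot (1 + o(1))$ from \cite{HilTen86}, with $\sigma$ the associated variance parameter, this yields the claim up to the polylogarithmic normalization $\alpha\sigma\sqrt{2\pi}$. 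To obtain the sharper form stated in the lemma, one invokes the local $\Psi$-estimates from \cite{HilTen86, HilTen}, which compare $\Psi(N/L, y)$ and $\Psi(N, y)$ directly through the saddle-point expansion, the ratio being governed by $L^{-\alpha}$ up to a genuinely bounded multiplicative factor.

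For \eqref{eq:L}, one argues more directly via \eqref{eq:psiNy}: $\Psi(L, y) = L^{\alpha(L, y) + o(1)}$. The formula \eqref{eq: Psi N-alpha} shows that $\alpha(\cdot, y)$ is non-increasing in its first argument, so $\alpha(L, y) \ge \alpha(N, y) = \alpha$, and the gap is easily seen to satisfy $\alpha(L, y) - \alpha = O(1/\log y)$. Hence
\begin{equation*}
L^{\alpha(L, y)} \le L^\alpha \cdot L^{O(1/\log y)} \le L^\alpha N^{o(1)},
\end{equation*}
using $L \le N$ in the final step.

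The main technical obstacle lies in the uniform stability of $\alpha(\cdot, y)$ under multiplicative perturbations of its first argument, and—specifically for \eqref{eq:N/L}—in removing the polylogarithmic loss inherent in the Rankin bound when $L$ is close to $N$. The explicit formula \eqref{eq: Psi N-alpha} settles the stability to leading order, while the sharper comparison needed in \eqref{eq:N/L} relies on the local saddle-point framework of Hildebrand and Tenenbaum \cite{HilTen86, HilTen}.
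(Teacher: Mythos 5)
Your treatment of \eqref{eq:N/L} correctly identifies that Rankin's trick alone loses a polylogarithmic factor (the normalization $\alpha\sigma\sqrt{2\pi\phi_2}$ in the Hildebrand--Tenenbaum saddle-point formula, which is of order $\sqrt{u}\log y$, not $O(1)$), and that one must then appeal to a local comparison of $\Psi(N/L,y)$ against $\Psi(N,y)$. At that point you are essentially citing the same kind of result the paper cites: the paper simply refers to \cite[Section~2]{Harp} for exactly the inequality $\Psi(x/d,y)\ll d^{-\alpha(x,y)}\Psi(x,y)$, uniformly in $1\le d\le x$. Your Rankin preamble is a reasonable heuristic warm-up, but it does not shorten the proof and the genuinely hard step remains a black-box citation in both accounts.

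Your argument for \eqref{eq:L} is a genuinely different route from the paper's, but it contains a gap. The paper deduces \eqref{eq:L} directly from \eqref{eq:N/L} by substituting $L\mapsto N/L$: one gets $\Psi(L,y)=\Psi(N/(N/L),y)\ll (L/N)^{\alpha}\Psi(N,y)$, and then $N^{-\alpha}\Psi(N,y)=N^{o(1)}$ by \eqref{eq:psiNy}, giving $L^{\alpha}N^{o(1)}$ with no further analysis of $\alpha(L,y)$. You instead argue via $\Psi(L,y)=L^{\alpha(L,y)+o(1)}$ and the claimed bound $\alpha(L,y)-\alpha=O(1/\log y)$. That bound is not uniform in $L$. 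Taking $L$ bounded and $y=(\log N)^{K}$, the formula \eqref{eq: Psi N-alpha} gives $\alpha(L,y)=1+o(1)$ while $\alpha(N,y)=1-1/K+o(1)$, so the gap is $\asymp 1/K$, much larger than $1/\log y=1/(K\log\log N)$. If you then use the corrected uniform bound $\alpha(L,y)-\alpha=O(1/K)$ together with the crude $L\le N$, you get $L^{O(1/K)}\le N^{O(1/K)}$, which is \emph{not} $N^{o(1)}$ for fixed $K$; the argument as written breaks. What saves the conclusion is that $\log L$ and $\alpha(L,y)-\alpha$ are inversely correlated: their product $\log L\cdot(\alpha(L,y)-\alpha)$ is maximized at an intermediate $L$ with $\log L\asymp \log N/e$, where the gap really is $\asymp 1/\log y$, giving exponent $\asymp u=\log N/\log y=o(\log N)$. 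So the final bound is correct, but obtaining it from your route requires this trade-off analysis, which you omit; your stated intermediate estimate is false and would mislead a reader. The paper's derivation of \eqref{eq:L} from \eqref{eq:N/L} avoids all of this and is the more robust route.
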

\begin{proof}
    For \eqref{eq:N/L}, see \cite[Section~2]{Harp}. To prove \eqref{eq:L}, simply note that
\begin{equation}\label{eq:psiL}
 \Psi(L, y) \ll \left(\frac{L}{N}\right)^{\alpha}\Psi(N,y)= L^{\alpha}N^{o(1)},
 \end{equation}
where, of course, we are using \eqref{eq:psiNy} for the last equality.
\end{proof}
\subsubsection{A decomposition of large smooth numbers}
Let $L_0 \leq N$ be a parameter to be chosen later. Note that any $n\in \cS(N,y)$ with $n> L_0$ can be written as
$$n = \ell m,\quad \mathrm{with}\quad L_0<\ell \le P(\ell)L_0 \quad \mathrm{and}\quad p(m) \geq P(\ell), $$
where $P(\ell)$ denotes the largest prime factor of $\ell$ and $p(m)$ denotes the smallest prime factor of $m$. Indeed, one can obtain such a decomposition by arranging all the prime factors of $n$ as $p_1\le p_2 \cdots \le p_k$, and keep on collecting the numbers $p_1,p_1p_2,\cdots$, until we reach a number $\ell$ that just crosses $L_0$. See also \cite{MerShp,ShaShpWij,Vau89}.

Then, we can write 
$$S_{f_y,\chi,P}(P)=\sum_{\substack{L_0 < \ell \leq P(\ell) L_0 \\ \ell \in \cS(N, y)}} \, \sum_{\substack{m \in \cS(N/\ell, y) \\ p(m) \geq P(\ell)}} \chi(\psi_{\ell m}(P)) + O(L_0).$$

\subsection{Concluding the proof}
Note that the range of $\ell$ is given by $(L_0,yL_0]$. After the dyadic partition of this range, we see that there is some $L \in (L_0, yL_0]$ such that 
\begin{equation}\label{eq:StoU}
S_{f_y,\chi,P}(P)  \ll U \log N + L_0, 
\end{equation}
where 
$$ U= \sum_{\substack{L < \ell \leq \min\left\{P(\ell)L_0, 2L\right\} \\ \ell \in \cS(N, y)}} \,  \sum_{\substack{m \in S(N/\ell, y) \\ p(m) \geq P(\ell)}} \chi(\psi_{\ell m}(P)).$$

We now argue exactly as in the proof of \cite[Theorem 1.3]{ShaShpWij}, and obtain
\begin{equation*}
\begin{split}
U^2 \ll  & (\log N)^2 \Psi(L,y) \\ 
&\quad\quad\times \sum_{q \leq y} \, \sum_{m_1,m_2 \in \cS(N/L,y)} \left| \sum_{L/q < \ell \le 2L/q} \chi(\psi_{q\ell m_1}(P)) \overline{\chi(\psi_{q\ell m_1}(P)} \right|. 
\end{split} 
\end{equation*}

For the rest of the proof, we set $\alpha= \alpha(N,y)$. Following the same argument as in \cite{ShaShpWij}, the contribution $Y_1$ from  the diagonal terms with $m_1=m_2$ is 
\begin{align*}
Y_1&\ll (\log N)^2 \Psi(L,y) \sum_{q \leq y} \Psi(N/L, y) L/q\\
&\ll L N^{-\alpha} \Psi(N,y)^2 N^{o(1)},
\end{align*}
where the last inequality follows from \eqref{eq:N/L} and the first inequality in \eqref{eq:psiL}.

To estimate the contribution $Y_2$ from the non-diagonal terms $m_1\neq m_2$, we apply \eqref{eq:chim1nm2n} instead of \cite[Lemma 2.2]{ShaShpWij}, and obtain
\begin{align*}
Y_2 & \ll (\log N)^2 \Psi(L,y)\cdot  y \Psi(N/L, y)^2\cdot N/L\cdot p^{\frac{1}{12}}R^{5/6+o(1)} \\
& \ll y p^{1/12}R^{5/6} L^{-(\alpha+1)} \Psi(N,y)^2 N^{1+o(1)}, 
\end{align*}
where the last inequality follows by combining both parts of Lemma~\ref{lem:psismallN}.

In particular, recalling that $L_0<L\le yL_0$, we have
\begin{align*}U^2& \ll Y_1 + Y_2   \le \(LN^{-\alpha} + yp^{1/12}R^{5/6}L_0^{-(\alpha+1)}N\) \Psi(N,y)^2 N^{o(1)} \\
& \le  y\(L_0N^{-\alpha} + p^{1/12}R^{5/6}L_0^{-(\alpha+1)}N\) \Psi(N,y)^2 N^{o(1)}. 
\end{align*}
Choosing $L_0 = p^{\frac{1}{12(\alpha+2)}} R^{\frac{5}{6(\alpha+2)}}N^{\frac{\alpha+1}{\alpha+2}}$, we derive
$$U \le y^{1/2} p^{\frac{1}{24(\alpha+2)}} R^{\frac{5}{12(\alpha+2)}}N^{\frac{1-(\alpha^2+\alpha)}{2(\alpha+2)}} \Psi(N,y) N^{o(1)}.$$

Hence, \eqref{eq:StoU} gives
\begin{align*}
S_{f_{y},\chi,P}(N)&\ll y^{1/2} p^{\frac{1}{24(\alpha+2)}} R^{\frac{5}{12(\alpha+2)}}N^{\frac{1-(\alpha^2+\alpha)}{2(\alpha+2)}} \Psi(N,y) N^{o(1)}\\
&\qquad\qquad\qquad\qquad+p^{\frac{1}{12(\alpha+2)}} R^{\frac{5}{6(\alpha+2)}}N^{\frac{\alpha+1}{\alpha+2}}.
\end{align*}

Setting $\gamma=\frac{\alpha^2+\alpha-1}{2(\alpha+2)}$, we have $\alpha-2\gamma=\frac{\alpha+1}{\alpha+2}$, and hence we finally deduce
\begin{align*}
S_{f_{y},\chi,P}(N) &\ll y^{1/2} p^{\frac{1}{24(\alpha+2)}} R^{\frac{5}{12(\alpha+2)}} N^{-\gamma} \Psi(N,y) N^{o(1)} \\
&\qquad\qquad\qquad\qquad + p^{\frac{1}{12(\alpha+2)}} R^{\frac{5}{6(\alpha+2)}} N^{\alpha - 2\gamma}.
\end{align*}

If $p^{\frac{1}{24(\alpha+2)}} R^{\frac{5}{12(\alpha+2)}}> N^{\gamma}$, then the stated bound at \eqref{eq:charysmoothbound} is trivial. So, let us assume that $p^{\frac{1}{24(\alpha+2)}} R^{\frac{5}{12(\alpha+2)}}\le N^{\gamma}$. In this case, the second term above is dominated by the first term because $\Psi(N,y)N^{o(1)}=N^{\alpha}$, and the proof concludes.
\qed

\begin{rem}\rm
Following the arguments in Section~\ref{sec:concludingThm1}, the main bottleneck in improving Theorem~\ref{thm:divbdd} lies in the estimation of $U_0$ in \eqref{eq:u0}. Note that the integers in the set $\cA_0(N,I)$ can be written as products of the form
\[
(\text{$x$-smooth}) \times (\text{$y$-rough}).
\]
This structure suggests that one might attempt to adapt the approach used in the proof of Theorem~\ref{thm:ysmooth}. However, as one may anticipate, complications arise for the integers whose $x$-smooth parts are small.
\end{rem}

\begin{rem}\rm
 Consider the classical function $r_0(n)$, the characteristic function of integers that are sums of two squares. It is well-known (see \cite[Equation~(4.90)]{Tenenbaum}, for instance) that
\[
\sum_{n \leq N} r_0(n) \ll \frac{N}{(\log N)^{1/2}},
\]

Following the argument used in the proof of Theorem~\ref{thm:divbdd}, one can easily deduce that
\[
S_{r_0,\chi,P} \ll \varepsilon^{-1} \frac{N}{(\log N)^{1/2}} \cdot \frac{\log \log R}{\log R}.
\]

Moreover, the same savings compared to the trivial estimate likely holds for many other multiplicative functions, \textit{supported sparsely} on the integers and taking values in the intervals $[0,1]$.
\end{rem}

\section*{Acknowledgement}
The author is grateful to Igor E. Shparlinski for suggesting this project, and to both Alina Ostafe and Igor E. Shparlinski for their valuable support and suggestions throughout the various stages of this work. The author also thanks Jitendra Bajpai and Simon L. Rydin Myerson for many helpful comments. This research was partially supported by the Australian Research Council Grant DP230100530.


\begin{thebibliography}{9}
\bibitem{Ayad}
M. Ayad, {\it Points $S$-entiers des courbes elliptiques\/}, Manuscripta Math., {\bf 76} (1992), no.~3--4, 305--324.


\bibitem{BLMN}
S. Bhakta, D. Loughran, S. L. Rydin Myerson, and M. Nakahara, 
{\it The elliptic sieve and Brauer groups\/}, 
Proc. Lond. Math. Soc. (3), {\bf 126} (2023), 1884--1922.


\bibitem{BhShp25}
S. Bhakta and I. E. Shparlinski, 
`On the correlations between character sums of division polynomials under shifts', 
{\it Preprint}, 2025, (available from https://arxiv.org/abs/2507.09271).

 \bibitem{Granv}
A.~Granville, `Smooth numbers: Computational number theory and
beyond', \textit{Algorithmic Number Theory:
Lattices, Number Fields, Curves, and Cryptography} Math. Sci. Res. Inst. Publ., vol. 44, Cambridge Univ. Press, 
Cambridge, 2008. 

\bibitem{HaWr} G. H. Hardy and E. M. Wright, {\it An introduction to
the theory of numbers\/}, Oxford Univ. Press, Oxford, 1979.

\bibitem{Harp}
A.~J.~Harper, `Minor arcs, mean values, and restriction theory for exponential sums over smooth numbers', 
\textit{Compos. Math.} \textbf{152} (2016), 1121-–1158.

\bibitem{HilTen86} A. Hildebrand and G. Tenenbaum, 
`On integers free of large prime factors', 
{\it Trans. Amer. Math. Soc.\/} {\bf 296} (1986), 265--290.


\bibitem{HilTen}
A.~Hildebrand and G.~Tenenbaum, `Integers without large prime
factors', \textit{J. de Th{\'e}orie des Nombres de Bordeaux} \textbf{5} (1993),  
411--484.



\bibitem{IwKow}
H. Iwaniec and E. Kowalski, {\it Analytic number theory\/}, Amer. Math. Soc., Providence, RI, 2004.

\bibitem{KorShp} M. Korolev and I. E. Shparlinski, 
`Sums of algebraic trace functions twisted by arithmetic functions',
{\it Pacif. J. Math.\/}   {\bf 304} (2020), 505--522.


\bibitem{Man} A. P. Mangerel, 
`Divisor-bounded multiplicative functions in short intervals', 
{\it Res. Math. Sci.\/} {\bf 10} (2023), 12. 

\bibitem{MerShp} L. Mérai and I. E. Shparlinski, 
`On the dynamical system generated by the Möbius transformation at smooth times', {\it Preprint}, 2025, (available from https://arxiv.org/abs/2507.12160).



\bibitem{ShaShpWij} X. Shao, I. E. Shparlinski and L. P. Wijaya, 
`Sums of Kloosterman sums over square-free and smooth integers', 
{\it Preprint}, 2024, (available from https://arxiv.org/abs/2411.12113).


\bibitem{ShSt}
I. E. Shparlinski and K. E. Stange,
 'Character sums with division polynomials', {\it Can. Math. Bull.\/}, {\bf 55} (2012), 850--857.

\bibitem{Silv1} J. H. Silverman,
`$p$-adic properties of division polynomials and elliptic divisibility sequences', 
{\it Math. Annalen\/}, {\bf 332} (2005),  443--471.

\bibitem{Silv2}
J. H. Silverman, {\it The arithmetic of elliptic curves\/}, Grad. Texts in Math., {\bf 106}, Springer, Dordrecht, 2009.

\bibitem{Sun} Y.-C. Sun, 
`On divisor bounded multiplicative functions in short intervals', {\it Preprint}, 2024,
(available from https://arxiv.org/abs/2401.08432).



\bibitem{Tenenbaum} G. Tenenbaum, {\it Introduction to analytic and probabilistic number theory\/}, 3rd ed.,
Grad. Studies in Math. {\bf 163}, Amer. Math. Soc., Providence, RI, 2015.


\bibitem{Vau89} R. C. Vaughan, 
`A new iterative method for Waring’s problem', 
{\it Acta Math.\/} {\bf 162} (1989), 1--71.

\bibitem{Verzobio}
M. Verzobio, {\it A recurrence relation for elliptic divisibility sequences\/}, Riv. Math. Univ. Parma (N.S.), {\bf 13} (2022), no.~1, 223--242.


\end{thebibliography}
\end{document}